\numberwithin{equation}{section}
\theoremstyle{plain}
\newtheorem{thm}{Theorem}%[section]
\newtheorem{lem}{Lemma}[section]
\newtheorem{cor}[lem]{Corollary}
\newtheorem{prop}[lem]{Proposition}
\theoremstyle{definition}
\newtheorem{defi}[lem]{Definition}
\newtheorem{rem}[lem]{Remark}
\newtheorem{ex}[lem]{Example}
\newcommand{\R}{\mathbb{R}}
\newcommand{\Z}{\mathbb{Z}}
\newcommand{\C}{\mathbb{C}}
\newcommand{\bbH}{\mathbb{H}}
\newcommand{\bbO}{\mathbb{O}}
\newcommand{\bbP}{\mathbb{P}}
\newcommand{\bbS}{\mathbb{S}}
\newcommand{\N}{\mathbb{N}}
\newcommand{\cN}{\mathcal{N}}
\newcommand{\Span}{\mathrm{Span}}
\newcommand{\card}{\textup{card}}
\def \Vec{\operatorname{Vec}}
\def\a{\alpha}
\def\b{\beta}
\def\r{\rho}
\begin{document}
\title[$\Z_2^n$-graded quasialgebras and the Hurwitz problem]
{$\Z_2^n$-graded quasialgebras and the Hurwitz problem on compositions of quadratic forms$^\dag$}
\thanks{\tiny $^\dag$Supported by SRFDP 20130131110001, NSFC 11471186 and 11571199.}
\subjclass[2010]{16S35, 16W50, 11E25}
\keywords{quasialgebra, Hurwitz problem, square identity}
\author[Y.-Q. Hu]{Ya-Qing Hu}
\address{Ya-Qing Hu, School of Mathematics, Shandong University, Jinan 250100, China}
\email{yachinghu@mail.sdu.edu.cn}
\author[H.-L. Huang]{Hua-Lin Huang*}\thanks{*Corresponding author.}
\address{Hua-Lin Huang, School of Mathematical Sciences, Huaqiao University, Quanzhou 362021, China}
\email{hualin.huang@hqu.edu.cn}
\author[C. Zhang]{Chi Zhang}
\address{Chi Zhang, School of Mathematics, Shandong University, Jinan 250100, China}
\email{chizhang@mail.sdu.edu.cn}
\date{}
\begin{abstract}
We introduce a series of $\Z_2^n$-graded quasialgebras $\bbP_n(m)$
which generalizes Clifford algebras, higher octonions, and higher Cayley algebras.
The constructed series of algebras and their minor perturbations are applied to contribute explicit solutions to
the Hurwitz problem on compositions of quadratic forms.
In particular, we provide explicit expressions of the well-known Hurwitz-Radon square identities in a uniform way,
recover the Yuzvinsky-Lam-Smith formulas,
confirm the third family of admissible triples proposed by Yuzvinsky in 1984,
improve the two infinite families of solutions obtained recently by Lenzhen,
Morier-Genoud and Ovsienko, and construct several new infinite families of solutions.
\end{abstract}
\maketitle
\medskip
\thispagestyle{empty}
%\vspace{-1cm}
%\newpage
\tableofcontents
%%%%%%%%%%%%%%%%%%%%%%%%%%%%%%%%%
%%%%%%%%%%%%%%%%%%%%%%%%%%%%%%%%%
\section{Introduction}
%%%%%%%%%%%%%%%%%%%%%%%%%%%%%%%%%
%%%%%%%%%%%%%%%%%%%%%%%%%%%%%%%%%
As a special class of algebras in braided tensor categories \cite{M},
group graded quasialgebras were introduced and investigated in \cite{AM} by Albuquerque and Majid.
The familiar twisted group algebras may be naturally viewed as group graded quasialgebras
and this viewpoint provides new derivation of some interesting known results and even helps to generalize them,
see for instance the subsequent work of Albuquerque and Majid on Clifford algebras \cite{AM1}.
More importantly, this viewpoint allows us to treat some mysterious {\it nonassociative} algebras exactly
as if they were effectively associative with a help of the theory of tensor categories (see for example \cite{M1}).
As a marvelous example, the octonions were realized as a $\Z_2^3$-graded quasialgebra in \cite{AM}.
The idea was applied by Morier-Genoud and Ovsienko to
introduce an interesting series of algebras generalizing the octonions \cite{MGO},
which helps to provide new solutions to the Hurwitz problem on compositions of quadratic forms \cite{LMGO}.
\par
Recall that the Hurwitz problem on compositions of quadratic forms \cite{H}
asks for the description of all triples of positive integers $[r,s,N],$
which are {\it admissible} in the sense that there exists a sum of squares formula of the type
\[ (a_1^2+a_2^2+\cdots+a_r^2)(b_1^2+b_2^2+\cdots+b_s^2)=c_1^2+c_2^2+\cdots+c_N^2 \]
where $a=(a_1, a_2, \cdots, a_r)$ and $b=(b_1, b_2, \cdots, b_s)$ are systems of indeterminates
and each $c_k$ is a bilinear form in $a$ and $b$ with coefficients in a field.
This century-old problem is centrally located in mathematics and remains widely open, see \cite{R, S} for a full overview.
The admissible triples of the form $[N,N,N]$ were determined
by Hurwitz \cite{H} in 1898 and the result is his well-known ``1, 2, 4, 8 Theorem".
A more general series of admissible triples of the form $[r, N, N]$ was settled independently
by Hurwitz \cite{H1} in 1918 and by Radon \cite{Rad} in 1922.
The celebrated Hurwitz-Radon Theorem states that $[r, N, N]$ is admissible
if and only if $r \le \rho(N)$ where $\rho$ is the Hurwitz-Radon function defined by
$\rho(N)=8\alpha+2^\beta$ if $N=2^{4\alpha+\beta}(2\gamma+1)$ with $0 \le \beta \le 3$.
The result of Hurwitz and Radon relies essentially on Clifford algebras, see for instance \cite{ABS}.
\par
Afterwards, many more new solutions to the Hurwitz problem have been obtained by various mathematicians,
see \cite{S} and references therein.
In particular, Yuzvinsky proposed in the early 1980s three families of admissible triples of form
\begin{equation}
\label{eq:Yuzvinsky}
[2n+2, 2^n-\varphi(n), 2^n] \quad \hbox{where} \quad
\varphi(n)=\left\{\begin{array}{lll}
             \binom{n  }{  n  /2}, &  n \equiv 0 \mod 4; \\
            2\binom{n-1}{(n-1)/2}, &  n \equiv 1 \mod 4; \\
            4\binom{n-2}{(n-2)/2}, &  n \equiv 2 \mod 4.
           \end{array}\right.
\end{equation}
Yuzvinsky's novel idea relies on orthogonal pairings on the group algebra over $\Z_2^n$, see \cite{Y1, Y2}.
The proof of Yuzvinsky is very concise and contains a number of errors and gaps.
In \cite{LS}, Lam and Smith have carefully corrected and clarified the proof for the cases in which $n \equiv 0,1 \mod 4.$
Hence these solutions are called the Yuzvinsky-Lam-Smith formulas in literatures.
However, the method of Yuzvinsky for the case in which $n \equiv 2 \mod 4$ is shown to be futile in \cite{LS}
and so the solutions of this case have stayed conjectural.
\par
Remarkably, the four normed division algebras connected to the Hurwitz ``1, 2, 4, 8 Theorem",
Clifford algebras connected to the Hurwitz-Radon Theorem, and the algebras connected to
the Yuzvinsky-Lam-Smith formulas are twisted group algebras over $\Z_2^n,$ see \cite{AM, AM1}.
With this insight, Morier-Genoud and Ovsienko recently introduced in \cite{MGO} their series of algebras $\mathbb{O}_n$
which generalizes the algebra of octonions and Clifford algebras simultaneously and
applied them to get explicit expressions of the Hurwitz-Radon square identities with the exception $n\equiv 0\mod 4$.
In their subsequent work \cite{LMGO} with Lenzhen,
the algebras $\mathbb{O}_n$ were applied to provide two infinite families of admissible triples
which are similar to the Yuzvinsky-Lam-Smith formulas.
\par
Note in hindsight that these algebras useful for the Hurwitz problem are $\Z_2^n$-graded quasialgebras. 
So it is tempting to try other similar algebras in the problem. 
In the present paper, we introduce a class of more complicated $\Z_2^n$-graded quasialgebras $\bbP_n(m)$ $(n \ge m \ge 4)$
and apply them to contribute new solutions to the Hurwitz problem.
The twisting function of our series of algebras $\bbP_n(m)$ contains
all the terms of $\bbO_n$ and additional terms of degree $\ge 4.$
Another feature of our algebras $\bbP_n(m)$ is that their twisting functions are linear
with respect to the second argument which is crucial in the searching for multiplicative pairs.
By applying the composition of Euclidean norms on the series of algebras $\bbP_n(4)$,
we provide explicit expressions of the Hurwitz-Radon square identities
in a uniform way without the exception $n\equiv 0\mod 4$ occurred in \cite{MGO}.
By choosing suitable multiplicative pairs in some delicate perturbations of $\bbP_n(4), \ \bbO_n,$ and $\bbP_n(n),$
we are able to recover the Yuzvinsky-Lam-Smith formulas in a simpler way,
and moreover, confirm Yuzvinsky's third conjectural family of admissible triples.
Using the similar idea, we find the duality between
the two basic methods of generating new admissible triples by addition and subtraction
and improve the Lenzhen-Morier-Genoud-Ovsienko admissible triples.
Furthermore, by applying the method of addition on the Yuzvinsky admissible triples,
we obtain three new infinite families as well.
\par
Recall that an admissible triple $[r,s,N]$ is said to be \emph{optimal} 
if there exists no admissible triples of sizes $[r+1,s,N]$, $[r,s+1,N]$ or $[r,s,N-1]$. 
The admissible triples obtained in this paper recover several well-known optimal triples, 
such as $[10,16,28]$ and $[10,22,30]$, and some best known triples, 
$[10,16,28]$, $[10,22,30]$, $[12,44,60]$, $[12,38,58]$, $[12,32,52]$ and $[14,40,64]$, see \cite{S}. 
In general, it is very challenging to determine 
whether a given admissible triple $[r,s,N]$ is optimal when the entries are big. 
So far, we have not obtained any result about the optimality of our new triples. 
\par
An outline of the paper is as follows.
Section \ref{sec:Z_2^n-graded} is devoted to the basic definitions of group graded quasialgebras and multiplicative pairs.
Some interesting $\Z_2^n$-graded quasialgebras are recalled and the series of algebras $\bbP_n(m)$ are introduced.
The uniform and explicit expression of the Hurwitz-Radon square identities is then given in Section \ref{sec:HRSIR}.
In Section \ref{sec:Yuzvinsky}, simpler constructions of Yuzvinsky-Lam-Smith formulas are provided
and Yuzvinsky's third family of admissible triples is verified.
In Section \ref{sec:LMGO}, some improvements of the Lenzhen-Morier-Genoud-Ovsienko formulas are presented.
The duality of two basic methods of generating admissible triples is also mentioned in passing.
Three new infinite series of admissible triples are shown in Section \ref{sec:newSeries}.
\par
Throughout the paper, by $\Z_2$ we mean the cyclic group of order $2$ and
by $\Z_2^n$ the direct product of $n$ copies of $\Z_2;$ we work on the field $\R$ of real numbers for convenience.
It is not hard to see that most of our results hold for any field of characteristic not $2$.
\par
%\noindent\textbf{Acknowledgments}.
%%%%%%%%%%%%%%%%%%%%%%%%%%%%%%%%%
%%%%%%%%%%%%%%%%%%%%%%%%%%%%%%%%%
\section{$\Z_2^n$-graded quasialgebras and multiplicative pairs}\label{sec:Z_2^n-graded}
%%%%%%%%%%%%%%%%%%%%%%%%%%%%%%%%%
%%%%%%%%%%%%%%%%%%%%%%%%%%%%%%%%%
In this section, we recall the notion of group graded quasialgebras,
some interesting examples of $\Z_2^n$-graded quasialgebras
and introduce some new series which will be applied in later sections.
For later applications in constructing admissible triples,
the notion of multiplicative pairs and the multiplicativity criterion are also recalled.
%%%%%%%%%%%%%%%%%%%%%%%%%%%%%%%%%
\subsection{Group graded quasialgebras}
%%%%%%%%%%%%%%%%%%%%%%%%%%%%%%%%%
Let $G$ be a finite abelian group with unit $e$ and
let $\phi$ be a normalized 3-cocycle on $G$ with coefficients in $\R^*$.
A $G$-graded quasialgebra is a $G$-graded $\R$-vector space $A$,
a product map $A \otimes A \to A$ preserving the total degree and associative in the sense
\[ (a \cdot b) \cdot c = \phi(|a|, |b|, |c|) a \cdot (b \cdot c) \]
for all homogeneous $a, b, c \in A$ and a 3-cocycle $\phi$.
Here and below, $|a|$ denotes the degree of $a$. A $G$-graded quasialgebra is called coboundary if $\phi$ is coboundary.
As pointed out in \cite{AM}, group graded quasialgebras may be defined in a more general setting.
Given $G$ and $\phi$ as above, if we extend $\phi$ linearly to $\R G^{\otimes 3}$,
then we can regard $(\R G, \phi)$ as a coquasi-Hopf algebra.
Let $\Vec_G^\phi$ denote the comodule category of $(\R G, \phi)$
and $\Vec_G^{\phi}$ has a natural structure of tensor category, see \cite{M1}.
Then a $G$-graded quasialgebra is nothing other than an algebra
in the tensor category $\Vec_G^{\phi}$ as defined in \cite{M}.
This viewpoint may allow us to apply Drinfeld's useful idea of gauge equivalence, see \cite{D} and \cite{AM}.
Natural generalization of twisted group algebras provides an interesting class of group graded quasialgebras.
Let $F: G \times G \to \R^*$ be a normalized 2-cochain, that is, $F$ obeys $F(x, e)=1=F(e,x)$ for all $x \in G$.
Denote by $\phi$ the differential of $F$, i.e., \[ \phi(x,y,z)=\frac{F(x,y)F(xy,z)}{F(y,z)F(x,yz)}.\]
Define the (generalized) twisted group algebra $\R_F G$ as follows.
It has the same vector space as the usual group algebra $\R G$ but a twisted product by $F$,
namely \[ x \cdot y = F(x,y)xy, \quad \forall x, y \in G.\]
Clearly, $\R_F G$ is an algebra in $\Vec_G^{\phi}$, or a $G$-graded quasialgebra,
and it is coboundary since $\phi=\partial F$.
%%%%%%%%%%%%%%%%%%%%%%%%%%%%%%%%%
\subsection{Examples of $\Z_2^n$-graded quasialgebras}
%%%%%%%%%%%%%%%%%%%%%%%%%%%%%%%%%
Remarkably enough, many interesting algebras, such as the normed division algebras and Clifford algebras,
turn out to be coboundary $\Z_2^n$-graded quasialgebras.
Now we recall some examples from \cite{AM, AM1, MGO}.
In the following, we write elements of the group $\Z_2^n$
in the form $x=(x_1, \cdots, x_n)$ with $x_i \in \{0,1\}$
and its multiplication by $+$ and we present the vector space $\R \Z_2^n$
by $\bigoplus_{x \in \Z_2^n} \R u_x$.
Define functions $f_m: \Z_2^n \times \Z_2^n \to \Z_2$ for all $1\le m \le n$ by
\begin{align}
& f_1=\sum_{i} x_iy_i,\quad f_2=\sum_{i<j} x_iy_j,                            \quad
  f_3=\sum\limits_{\substack{{\rm distinct}\ i,j,k \\ i<j}} x_ix_jy_k,        \quad \notag  \\
& f_4=\sum\limits_{\substack{{\rm distinct}\ i,j,k,l\\ i<j<k}} x_ix_jx_ky_l,  \quad \cdots, \\
& f_m=\sum\limits_{\substack{{\rm distinct}\ i_1,i_2,\cdots,i_{m-1},l\\ i_1<\cdots<i_{m-1}}}
                                         x_{i_1}x_{i_2}\cdots x_{i_{m-1}}y_l, \quad \cdots \notag
\end{align}
\begin{ex}
Let $F_{Cl}: \Z_2^n \times \Z_2^n \to \R^*$ be a function defined by
\begin{equation} F_{Cl}(x,y)=(-1)^{f_1(x,y)+f_2(x,y)}. \end{equation}
Then the associated twisted group algebra $\R_{F_{Cl}} \Z_2^n$ is
the well-known real Clifford algebra $Cl_{0,n}$, see \cite{AM1} for detail.
This recovers the algebra of complex numbers $\C$ when $n=1$ and the algebra of quaternions $\bbH$ when $n=2$.
Note that $Cl_{0,n}$ is associative in the usual sense since the function $F_{Cl}$ is a 2-cocycle.
\end{ex}
\begin{ex}
Assume $n \ge 3$. Define the function $F_\bbO: \Z_2^n \times \Z_2^n \to \R^*$ by
\begin{equation} F_\bbO(x,y)=(-1)^{f_1(x,y)+f_2(x,y)+f_3(x,y)}.\end{equation}
Then the twisted group algebra $\R_{F_\bbO} \Z_2^n$ is the algebra of higher octonions $\bbO_n$
introduced in \cite{MGO} by generalizing the realization of octonions via quasialgebra
(i.e., the case of $n=3$) given in \cite{AM}.
Clearly, the series of algebras $\bbO_n$ are nonassociative in the usual sense as the function $F_\bbO$ is not a 2-cocycle.
\end{ex}
%%%%%%%%%%%%%%%%%%%%%%%%%%%%%%%%%
\subsection{The series of algebras $\bbP_n(m)$}
%%%%%%%%%%%%%%%%%%%%%%%%%%%%%%%%%
Now assume $n \ge m \ge 4$. Define
\begin{equation}
f_{\bbP(m)}(x,y)=\sum_{i=1}^3 f_i(x,y) + f_m(x,y) \quad \mathrm{and} \quad F_{\bbP(m)}(x,y)=(-1)^{f_{\bbP(m)}(x,y)}.
\end{equation}
Let $\bbP_n(m)$ denote the corresponding twisted group algebra $\R_{F_{\bbP(m)}} \Z_2^n$
and we get a series of nonassociative algebras $\{\bbP_n(m)\}_{n \ge 4}$.
\par
It was shown in \cite{AM} that Cayley-Dickson algebras are all coboundary $\Z_2^n$-graded quasialgebras.
Recall that, starting from $\R$, the Cayley-Dickson construction produces the algebra $\C$ of complex numbers,
the algebra $\bbH$ of quaternions, and the algebra $\bbO$ of octonions consecutively.
The immediate follower of the octonions $\bbO$ is the algebra $\bbS$ of sedenions.
It can be realized as the twisted group algebra $\R_{F_\bbS} \Z_2^4$ with
\[F_\bbS(x,y)=F_{\bbP(4)}(x,y) (-1)^{\sum\limits_{\substack{{\rm distinct}\ i,j,k,4\\ j<k}}x_iy_jy_kx_4}\ .\]
Therefore, the series of algebras $\bbP_n(4)$ may be regarded
as a partial generalization of the algebra $\bbS$ of sedenions. Similarly, $\bbP_n(m)$ are partial generalization of the higher Cayley algebras \cite{AM}.
%%%%%%%%%%%%%%%%%%%%%%%%%%%%%%%%%
\subsection{Multiplicative pairs and multiplicativity criterion}\label{subsec:multiplicative}
%%%%%%%%%%%%%%%%%%%%%%%%%%%%%%%%%
In this subsection we recall some key notions appeared
in the novel method of Yuzvinsky \cite{Y1, Y2}, see also \cite{LS, LMGO}.
Consider the twisted group algebra $\R_F \Z_2^n$.
For every $a=\sum_{x \in \Z_2^n}a_x u_x \in \R_F \Z_2^n$, we define its Euclidean norm by
\begin{equation}
\label{eq:Euclid}
\cN(a)=\sum_{x\in \Z_2^n}a_x^2.
\end{equation}
With this square norm, it is well-known that ``the law of moduli"
\[\cN(a) \cN(b) = \cN(a \cdot b), \quad \forall a, b \] of $\R, \ \C, \ \bbH, \ \bbO$
provides exactly the ``1,2,4,8"-square identities.
The idea of Yuzvinsky is that in order to obtain (more general) square identities
it suffices to find a pair of subsets $A, B$ of $\Z_2^n$ such that
\begin{equation}
\label{eq:NormProd}
\cN(a) \cN(b) = \cN(a \cdot b), \quad \forall a \in \Span_\R(u_x, \ x \in A), \ b \in \Span_\R(u_y, \ y \in B).
\end{equation}
If this is the case, we say that $(A, B)$ is a \emph{multiplicative pair,} or an \emph{orthogonal pairing}.
Given such a pair $(A, B)$, one may induce from it an admissible triple
$[r,s,N]=[\card{}(A),\,\card{}(B),\,\card{}(A+B)]$ and the explicit square identity is
\begin{equation}
\label{eq:SqId}
\left(\sum_{x \in A} a_x^2\right)\left(\sum_{y \in B} b_y^2\right)=\sum_{z \in A+B} c_z^2
\end{equation}
where $A+B$ is the sumset $\{ a+b \mid a \in A, b \in B\}$ and
\[ c_z=\sum\limits_{\substack{z=x+y\\ (x,y) \in A \times B }} F(x,y)a_xb_y
      =\sum_{(x,x+z) \in A\times B} F(x, x+z)a_x b_{x+z}.\]
\par
In the rest of the paper {\em we always assume that the twisting function $F$ is
of the form $F(x,y)=\left(-1\right)^{f(x,y)}$ where $f$ maps $\Z_2^n \times \Z_2^n$ to $\Z_2$.}
When there is no risk of confusion,
the map $f$ is also called the twisting function of the associated twisted group algebra.
This restriction on $F$ allows us to formulate a criterion of the multiplicativity of a pair of subsets of $\Z_2^n$.
The following proposition is well-known, see for example \cite{LMGO}.
We include it here for completeness.
\begin{prop}
\label{prop:multi}
$(A, B)$ forms a multiplicative pair
if and only if for all $z\in A+B$ and for all $x\neq t$ such that $x+z,\ t+z\in B$, the following equation holds
\begin{equation}
\label{eq:fcond0}
f(x,x+z)+f(x,t+z)+f(t,x+z)+f(t,t+z)=1.
\end{equation}
\end{prop}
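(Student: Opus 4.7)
The plan is to verify the identity $\cN(a \cdot b) = \cN(a)\cN(b)$ as a polynomial identity in the scalar indeterminates $\{a_x\}_{x \in A}$ and $\{b_y\}_{y \in B}$, then match coefficients on each side. First I would write $a \cdot b = \sum_{x \in A, y \in B} F(x,y)\, a_x b_y\, u_{x+y}$, regroup by common degree $z$, square the resulting coefficient $c_z$, and sum over $z \in A+B$ to obtain the single expression
\begin{equation*}
\cN(a \cdot b) \;=\; \sum_{\substack{x, x' \in A,\ y, y' \in B \\ x+y \,=\, x'+y'}} F(x,y)\,F(x',y')\, a_x a_{x'} b_y b_{y'}.
\end{equation*}

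Next I would split this sum into its diagonal part, where $(x,y) = (x',y')$, and the off-diagonal part. Because $x+y = x'+y'$ together with $(x,y) \neq (x',y')$ forces both $x \neq x'$ and $y \neq y'$, and because $F(x,y)^2 = 1$, the diagonal collapses cleanly to $\sum_{x \in A,\, y \in B} a_x^2 b_y^2 = \cN(a)\cN(b)$. Hence multiplicativity of $(A,B)$ is equivalent to the off-diagonal sum vanishing as a polynomial in the $a_x$ and $b_y$, which in turn is equivalent to the vanishing of the coefficient of every monomial $a_\alpha a_\beta b_\gamma b_\delta$ with $\alpha \neq \beta$ in $A$ and $\gamma \neq \delta$ in $B$.

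The crucial bookkeeping step is tallying the contributions to such a fixed monomial. The key observation is that in $\Z_2^n$ the three constraints $\alpha + \gamma = \beta + \delta$, $\alpha + \delta = \beta + \gamma$, and $\alpha + \beta = \gamma + \delta$ are equivalent; hence either no reordering $(x,x',y,y')$ of $\{\alpha,\beta\}\times\{\gamma,\delta\}$ lies in the index set, or all four do. Assembling the four surviving terms produces the coefficient $2\bigl[F(\alpha,\gamma)F(\beta,\delta) + F(\alpha,\delta)F(\beta,\gamma)\bigr]$, so the off-diagonal part vanishes if and only if
\begin{equation*}
f(\alpha,\gamma) + f(\beta,\delta) + f(\alpha,\delta) + f(\beta,\gamma) \;\equiv\; 1 \pmod{2}
\end{equation*}
for all such quadruples with $\alpha + \beta = \gamma + \delta$.

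Finally I would translate this into the stated form \eqref{eq:fcond0}: setting $z := \alpha + \gamma \in A+B$, $x := \alpha$, and $t := \beta$, one has $\gamma = x+z$ and $\delta = t+z$, both in $B$ with $x \neq t$, and the displayed parity identity becomes precisely the condition of the proposition. The main obstacle is simply keeping the combinatorics straight — in particular, the factor of $2$ coming from the $\alpha \leftrightarrow \beta$ symmetry and the emergence of the second summand $F(\alpha,\delta)F(\beta,\gamma)$ from the $\gamma \leftrightarrow \delta$ swap — but once these reorderings are correctly paired, the equivalence is immediate.
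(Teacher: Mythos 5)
Your proposal is correct and follows essentially the same route as the paper: expand $\cN(a\cdot b)$ as a quadratic form in the $a_x, b_y$, identify the diagonal terms with $\cN(a)\cN(b)$, and demand that the cross terms cancel, which after the substitution $x=\alpha$, $t=\beta$, $z=\alpha+\gamma$ is exactly \eqref{eq:fcond0}. Your bookkeeping is in fact a little more explicit than the paper's (you spell out that the two cancelling contributions to a given monomial arise from the two distinct degrees $z=\alpha+\gamma$ and $z=\alpha+\delta$, and where the factor of $2$ comes from), but the underlying argument is identical.
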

\begin{proof}
Let $a=\sum_{x\in A} a_x u_x $ and $b=\sum_{y\in B} b_y u_y$.
Then the product of their Euclidean norms is obviously
$$\cN(a)\,\cN(b)=\sum_{(x, y)\in A\times B}\,a_x^2\,b_y^2
                =\sum\limits_{z\in A+B}\quad \sum_{(x,x+z)\in A\times B} a_x^2 b_{x+z}^2.$$
On the other hand,
\begin{align*}
\cN(a\cdot{}b)&=\sum_{z\in A+B}\left(\sum_{(x,x+z)\in A\times B} (-1)^{f(x,x+z)} a_x b_{x+z}\right)^2\\
              &=\sum_{z\in A+B}\quad \sum\limits_{\substack{(x, x+z)\in A\times B\\ (t,t+z)\in A\times B}} (-1)^{f(x,x+z)+f(t,t+z)} a_x a_t b_{x+z} b_{t+z}.
\end{align*}
Hence the condition (\ref{eq:NormProd}) is fulfilled 
if and only if for all $z\in A+B$ and for all $x\neq t$ such that $x+z,\ t+z\in B$,
the coefficients of $a_x\,a_t\,b_{x+z}\,b_{t+z}$ and $a_x\,a_t\,b_{t+z}\,b_{x+z}$
are exactly opposite, i.e., the equation (\ref{eq:fcond0}) holds.
\end{proof}
\par
This proposition has several equivalent forms, and later on we will need the following one:
\begin{cor}
\label{cor:multieq}
$(A, B)$ forms a multiplicative pair
if and only if for all $x\not=t\in A$ and for all $y\in B$ such that $x+t+y\in B$,
the following equation holds
\begin{equation}
\label{eq:fcond1}
f(x,y)+f(t,y)+f(x,x+t+y)+f(t,x+t+y)=1.
\end{equation}
\end{cor}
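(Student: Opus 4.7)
The plan is to derive Corollary \ref{cor:multieq} from Proposition \ref{prop:multi} by a direct change of variables in $\Z_2^n$, taking advantage of the fact that addition has order $2$. First I would introduce the substitution $y := x+z$, so that $z = x+y$ and $t+z = x+t+y$. This should set up a bijection between the data $(z,x,t)$ parametrizing the quantifier in Proposition \ref{prop:multi} and the data $(x,t,y)$ parametrizing the quantifier in Corollary \ref{cor:multieq}.

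Next I would verify that the two ranges of quantifiers correspond under this substitution. In the proposition we have $x,t \in A$ with $x\neq t$, $z \in A+B$, and $x+z,\,t+z \in B$. Setting $y=x+z$ turns ``$x+z\in B$'' into ``$y\in B$'' and ``$t+z\in B$'' into ``$x+t+y\in B$'', while the condition $z \in A+B$ becomes automatic since $z=x+y$ with $x\in A$, $y\in B$. Conversely, given the corollary's data $x\neq t \in A$ and $y\in B$ with $x+t+y\in B$, setting $z:=x+y$ puts $z$ in $A+B$ and recovers $x+z=y\in B$ and $t+z=x+t+y\in B$, which shows the correspondence is a bijection.

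Finally I would substitute into equation (\ref{eq:fcond0}). The four terms
\[
f(x,x+z)+f(x,t+z)+f(t,x+z)+f(t,t+z)
\]
transform, under $y=x+z$, into
\[
f(x,y)+f(x,x+t+y)+f(t,y)+f(t,x+t+y),
\]
which is exactly the left-hand side of (\ref{eq:fcond1}) after rearrangement. Since the sum equals $1$ in $\Z_2$ if and only if the transformed sum does, Proposition \ref{prop:multi} immediately yields the stated equivalence.

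There is no substantive obstacle here: the corollary is essentially a notational reparametrization of the proposition, and the only point demanding care is checking that the substitution is truly a bijection between the quantifier domains, which is immediate in characteristic $2$. I would therefore expect the proof to be very short, amounting to stating the substitution, recording the translation of the constraints, and invoking Proposition \ref{prop:multi}.
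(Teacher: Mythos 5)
Your proposal is correct: the substitution $y=x+z$ (equivalently $z=x+y$) is exactly the reparametrization that turns Proposition \ref{prop:multi} into Corollary \ref{cor:multieq}, and your check that the quantifier domains correspond bijectively is the only point needing care. The paper itself omits the proof, treating the corollary as an immediate equivalent form of the proposition, so your argument simply supplies the details the authors left implicit.
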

\par
For later applications, we also need the following simple fact
in which further restriction is put on the twisting function $f$.
\begin{cor}
\label{cor:linear}
Suppose that $F(x,y)=(-1)^{f(x,y)}$ where $f(x,y)$ is linear in its second argument,
then the equation \eqref{eq:fcond0} becomes
\begin{equation}
\label{eq:compcond}
f(x,x+t)+f(t,x+t)=1, \quad \forall x \ne t \in A.
\end{equation}
\end{cor}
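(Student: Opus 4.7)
The plan is to directly expand each of the four summands on the left-hand side of equation \eqref{eq:fcond0} using the assumed linearity of $f$ in its second argument. Concretely, I would write
\[ f(x,x+z) = f(x,x) + f(x,z), \qquad f(x,t+z) = f(x,t) + f(x,z), \]
\[ f(t,x+z) = f(t,x) + f(t,z), \qquad f(t,t+z) = f(t,t) + f(t,z), \]
all computed modulo $2$. Adding these four identities, I would observe that the $z$-dependent contributions $f(x,z)$ and $f(t,z)$ each appear exactly twice and therefore cancel in $\Z_2$.

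What survives is the sum $f(x,x) + f(x,t) + f(t,x) + f(t,t)$, which by a second application of linearity in the second argument factors as $f(x, x+t) + f(t, x+t)$. Thus the condition in Proposition \ref{prop:multi} is seen to be independent of $z$, and equation \eqref{eq:fcond0} becomes precisely equation \eqref{eq:compcond}.

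Finally, I would note that this collapse of $z$ is exactly what removes the awkward quantifier ``for all $z\in A+B$ with $x+z,\,t+z\in B$'' in Proposition \ref{prop:multi}, leaving a clean pairwise condition on distinct $x,t\in A$. There is no real obstacle in this argument — the only subtlety is to carry the mod $2$ bookkeeping carefully and to recognize $f(x,x)+f(x,t)+f(t,x)+f(t,t)$ as the linear expansion of $f(x,x+t)+f(t,x+t)$.
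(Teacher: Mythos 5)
Your computation is correct and is exactly the intended argument: the paper states this corollary without proof precisely because the linearity expansion you carry out — cancellation of the $z$-terms mod $2$ and recombination of $f(x,x)+f(x,t)+f(t,x)+f(t,t)$ into $f(x,x+t)+f(t,x+t)$ — is the whole content. Your closing remark that the condition becomes independent of $z$ correctly explains why the quantifier over $z\in A+B$ disappears in \eqref{eq:compcond}.
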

\par
In the rest of the paper, our main mission is to search for multiplicative pairs
in suitable $\Z_2^n$-graded quasialgebras $\mathbb{A}.$
According to the previous discussion, we shall frequently encounter the following function
\begin{equation}\label{eq:comptest}
c_{\mathbb{A}}(x,t) \triangleq f_{\mathbb{A}}(x,x+t)+f_{\mathbb{A}}(t,x+t), \quad \forall x, t \in \Z_2^n
\end{equation}
and we call it the {\em test function} of multiplicativity.
%%%%%%%%%%%%%%%%%%%%%%%%%%%%%%%%%
%%%%%%%%%%%%%%%%%%%%%%%%%%%%%%%%%
\section{The Hurwitz-Radon Square Identities Revisited}\label{sec:HRSIR}
%%%%%%%%%%%%%%%%%%%%%%%%%%%%%%%%%
%%%%%%%%%%%%%%%%%%%%%%%%%%%%%%%%%
The aim of this section is to give explicit formulas of the Hurwitz-Radon triples $[\r(2^n), 2^n, 2^n].$
Recall that this was partially achieved by Morier-Genoud and Ovsienko in \cite{MGO}
with a help of the higher octonions $\bbO_n$ only with the exception $n \equiv 0 \mod 4.$
It turns out that by applying the series of algebras $\bbP_n(4)$ instead of $\bbO_n,$
we are able to rule out the exception and come up with the square identities in a uniform manner. The crux is to construct the so-called Hurwitzian sets. Our method of construction in this section is combinatorial and seems of independent interest.
%%%%%%%%%%%%%%%%%%%%%%%%%%%%%%%%%
\subsection{The Hurwitzian sets}
%%%%%%%%%%%%%%%%%%%%%%%%%%%%%%%%%
Thanks to Subsection \ref{subsec:multiplicative}, in order to obtain the Hurwitz-Radon square identities
it suffices to construct multiplicative pairs of form
$(H,\Z_2^n)$ with $\card{H} = \r(2^n)$ in the algebra $\bbP_n(4).$
We will call $H$ a \emph{Hurwitzian set} in the algebra $\bbP_n(4),$ or simply an H-set,
if $(H,\Z_2^n)$ satisfies this condition.
According to Corollary \ref{cor:linear}, an H-set is a subset $H$ of $\Z_2^n$ such that
\[ c_{\bbP(4)}(x,t) = f_{\bbP(4)}(x,x+t)+f_{\bbP(4)}(t,x+t)=1, \quad \forall x \ne t \in H. \]
\par
Given $x=(x_1, x_2, \dots, x_n) \in \Z_2^n,$ set $o(x)=1+\sum_{i=1}^n x_i 2^{n-i}.$
Define an order on $\Z_2^n$ by
$$x > y \Leftrightarrow o(x) > o(y).$$
It is obvious that this order is nothing other than the lexicographic order
and $o(x)$ indicates the exact position of $x$ in $\Z_2^n.$
With this order, all values of $c_{\bbP(4)}(x,t)$ will form a $2^n$-by-$2^n$ symmetric matrix
and all H-sets are characterized as subsets of $\Z_2^n$
such that the corresponding principal submatrices of size $\r(2^n)$
have zeros in the diagonal and all ones off-diagonal.
Therefore, searching for H-sets is equivalent to searching for such principal submatrices.
\par
By the definitions of $f_{\bbP(4)}$ and $f_\bbO,$ we have
\[ c_{\bbP(4)}(x,t)= [f_\bbO(x, x+t) +f_\bbO(t, x+t)] + [f_4(x, x+t) +f_4(t, x+t)]. \]
Let $\b_4(x,t)=f_4(x,x+t)+f_4(t,x+t).$ Note that
\begin{align*}
f_4(x,x)=\sum\limits_{\substack{{\rm distinct}\ i,j,k,l\\ i<j<k}} x_ix_jx_kx_l=4 \sum\limits_{i<j<k<l} x_ix_jx_kx_l=0.
\end{align*}
Hence, $\b_4(x,t)=f_4(x,t)+f_4(t,x).$
On the other hand, it was observed in \cite{MGO} that
\[ f_\bbO(x, x+t) +f_\bbO(t, x+t)=f_\bbO(x+t,x+t).\]
Let $\alpha_\bbO(x) = f_\bbO(x,x).$
We remark that $\alpha_\bbO(x)$ is called a generating function in \cite{MGO}.
Denote by $|x|$ the \emph{weight} of $x\in \Z_2^n,$
i.e., the number of 1 entries in $x$ written as an $n$-tuple of $0$ and $1$.
The explicit formula of $\a_\bbO(x)$ is as follows:
\begin{align*}
\a_\bbO(x)&=\sum_{1\leq{}i<j<k\leq{}n}x_ix_jx_k+\sum_{1\leq{}i<j\leq{}n}\,x_ix_j+\sum_{1\leq{}i\leq{}n}x_i\\
          &=\binom{|x|}{3}+\binom{|x|}{2}+\binom{|x|}{1}.
\end{align*}
Therefore $\a_\bbO(x)$ depends only on $|x|$ and can be easily proved to be 4-periodic:
\begin{equation}
\label{tab:a_bbO}
\setlength{\extrarowheight}{3pt}
\begin{array}{c|*9{|c}}
|x|     & 1 & 2 & 3 & 4 & 5 & 6 & 7 & 8 &\cdots\\ \hline
\a_\bbO & 1 & 1 & 1 & 0 & 1 & 1 & 1 & 0 &\cdots\\
\end{array}
\end{equation}
%%%%%%%%%%%%%%%%%%%%%%%%%%%%%%%%%
\subsection{Explicit expressions of the Hurwitz-Radon square identities}
%%%%%%%%%%%%%%%%%%%%%%%%%%%%%%%%%
Now we are ready to present our H-set.
To determine an H-set $H=\{\mathbf{h}\}$ in $\Z_2^n,$ it suffices to determine the subset
$o(H)=\{o(\mathbf{h})\}$ in $\{1, 2, 3, \dots, 2^n\}.$
We observe an interesting method to find the latter by the following function.
\par
\begin{defi}\label{def:hfunction}
Define the function $H: \N \to \N,$ called H-function in the following, by
\begin{equation}
\label{eq:H-function}
\setlength{\extrarowheight}{3pt}
\begin{array}{c|*{9}{|c}}
n    & 1\leq n\leq 9 & 10 & 11 & 12 & 13 & 14 & 15  & 16  & n\geq 17  \\ \hline
H(n) &      n-1      & 16 & 32 & 56 & 64 & 88 & 104 & 112 & 2^4 H(n-8)\\
\end{array}
\end{equation}
\end{defi}
Now let $H \subset \Z_2^n$ be the subset such that $o(H)=\{H(i)+1 \mid i=1,\cdots,\r(2^n)\}.$
More precisely, $H=\{\mathbf{h}_i \mid o(\mathbf{h}_i)= H(i)+1, \ 1 \le i \le \r(2^n) \}.$
Recall that an arbitrary $\mathbf{h} \in \Z_2^n$ is obtained by the 2-adic digital expression of $o(\mathbf{h})-1.$
It turns out that such a set $H$ is an H-set.
\par
Before stating the main theorem, we include the following example of H-sets to
offer some flavor to the reader and also for later applications.
\begin{table}[!hbtp]
\setlength{\extrarowheight}{2pt}
\renewcommand{\arraystretch}{1.2}
\caption{$n=7$ and $\r(2^7)=16$}
\label{tab:16h-vectors}
%\resizebox{0.8\textwidth}{!}{
\begin{tabular}{c@{=}c|c@{=}c||c@{=}c|c@{=}c}  \hline
$\mathbf{h}_1$ & $(0,0,0,0,0,0,0)$ & $H(1)$ & $0$ & $\mathbf{h}_9   $ & $(0,0,0,1,0,0,0)$ & $H(9) $& $8  $ \\ \hline
$\mathbf{h}_2$ & $(0,0,0,0,0,0,1)$ & $H(2)$ & $1$ & $\mathbf{h}_{10}$ & $(0,0,1,0,0,0,0)$ & $H(10)$& $16 $ \\ \hline
$\mathbf{h}_3$ & $(0,0,0,0,0,1,0)$ & $H(3)$ & $2$ & $\mathbf{h}_{11}$ & $(0,1,0,0,0,0,0)$ & $H(11)$& $32 $ \\ \hline
$\mathbf{h}_4$ & $(0,0,0,0,0,1,1)$ & $H(4)$ & $3$ & $\mathbf{h}_{12}$ & $(0,1,1,1,0,0,0)$ & $H(12)$& $56 $ \\ \hline
$\mathbf{h}_5$ & $(0,0,0,0,1,0,0)$ & $H(5)$ & $4$ & $\mathbf{h}_{13}$ & $(1,0,0,0,0,0,0)$ & $H(13)$& $64 $ \\ \hline
$\mathbf{h}_6$ & $(0,0,0,0,1,0,1)$ & $H(6)$ & $5$ & $\mathbf{h}_{14}$ & $(1,0,1,1,0,0,0)$ & $H(14)$& $88 $ \\ \hline
$\mathbf{h}_7$ & $(0,0,0,0,1,1,0)$ & $H(7)$ & $6$ & $\mathbf{h}_{15}$ & $(1,1,0,1,0,0,0)$ & $H(15)$& $104$ \\ \hline
$\mathbf{h}_8$ & $(0,0,0,0,1,1,1)$ & $H(8)$ & $7$ & $\mathbf{h}_{16}$ & $(1,1,1,0,0,0,0)$ & $H(16)$& $112$ \\ \hline
\end{tabular}%}
\end{table}
\par
\begin{thm}
\label{thm:unified}
Let $n$ be a positive integer, $N=2^n$ and $r=\r(N).$ Then the set
$H=\{\mathbf{h_i}\}_{i=1}^{r} \subset \Z_2^n$ as defined above is a Hurwitzian set in $\bbP_n(4).$
\end{thm}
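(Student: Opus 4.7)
The plan is to reduce the Hurwitzian condition to a finite combinatorial check, via the structural properties of the test function $c_{\bbP(4)}$. By Corollary~\ref{cor:linear}, it suffices to verify $c_{\bbP(4)}(x,t)=1$ for every pair of distinct $x,t \in H$. Decomposing $f_{\bbP(4)}=f_\bbO+f_4$ and using linearity in the second argument, the vanishing $f_4(x,x)=0$ already noted in the excerpt, and the identity $f_\bbO(x,x+t)+f_\bbO(t,x+t)=\a_\bbO(x+t)$ cited from \cite{MGO}, one obtains
\[
c_{\bbP(4)}(x,t) \;=\; \a_\bbO(x+t) \;+\; \b_4(x,t), \qquad \b_4(x,t):=f_4(x,t)+f_4(t,x).
\]
The first summand depends only on $|x+t|\bmod 4$ via \eqref{tab:a_bbO}.

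I would next show that $\b_4(x,t)$ likewise depends only on the sizes $(p,q,s):=(|S\setminus T|,\,|T\setminus S|,\,|S\cap T|)$, where $S=\mathrm{supp}(x)$ and $T=\mathrm{supp}(t)$. A direct count gives
\[
f_4(x,t) \;\equiv\; |T|\binom{|S|}{3}+|S\cap T|\binom{|S|-1}{2} \pmod 2,
\]
which vanishes for $|S|\le 2$ and equals $|T|+|S\cap T|\bmod 2$ when $|S|=3$. Inspection of Definition~\ref{def:hfunction} shows every $\mathbf{h}_i\in H$ has weight at most $3$, so $|S|,|T|\le 3$. Enumerating the admissible triples and comparing with the $4$-periodic values of $\a_\bbO$, the \emph{bad} configurations (those with $\a_\bbO(p+q)+\b_4=0$) are exactly
\[
(|S|,|T|,s) \;\in\; \{(2,2,0),\,(2,3,1),\,(3,2,1),\,(3,3,1)\}.
\]
It therefore suffices to prove that no pair in $H$ realizes any of these.

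The final step exploits the block structure encoded in Definition~\ref{def:hfunction}. The vectors $\mathbf{h}_1,\ldots,\mathbf{h}_8$ exhaust all subsets of $\{n-2,n-1,n\}$; I call these \emph{block $0$}. For each $k\ge 1$, the vectors $\mathbf{h}_{8k+1},\ldots,\mathbf{h}_{8k+8}$ form \emph{block $k$}, consisting of all $\binom{4}{1}+\binom{4}{3}=8$ odd-weight subsets of a fixed $4$-bit window (suitably truncated when $n$ is too small). Block $1$ sits in $\{n-6,\ldots,n-3\}$, and the recursion $H(i)=16H(i-8)$ shifts each subsequent window up by four bits, so the windows for $k\ge 1$ are pairwise disjoint and disjoint from block $0$. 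Consequently: (i) pairs from distinct blocks have $s=0$; (ii) blocks $k\ge 1$ contain no weight-$2$ vector, so any pair with $|S|=2$ must use a block-$0$ vector; (iii) any two weight-$2$ vectors in block $0$ overlap in exactly one bit, and the unique block-$0$ weight-$3$ vector $\mathbf{h}_8$ contains each of them (overlap $2$); and (iv) two weight-$3$ vectors inside a common $4$-bit window satisfy $s\ge|S|+|T|-4=2$. A direct case analysis using (i)--(iv) eliminates all four bad configurations.

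The main obstacle is the intra-block verification for blocks $k\ge 1$, where $\binom{8}{2}=28$ pairs must be surveyed; the window-size bound in (iv) and the absence of weight-$2$ vectors noted in (ii) collapse this verification to a short check. Since the block decomposition is uniform in $n$ (only the number of blocks varies, governed by $\rho(2^n)$), no separate induction beyond the H-function recursion itself is required.
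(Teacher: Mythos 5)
Your proof is correct and, at its core, follows the same strategy as the paper's: reduce via Corollary~\ref{cor:linear} to checking $c_{\bbP(4)}(x,t)=\a_\bbO(x+t)+\b_4(x,t)=1$, observe that every element of $H$ has weight at most $3$, and finish with a finite case analysis. The difference is in how that analysis is organized, and your organization is tighter in two respects. First, the paper runs cases on $|x+t|\in\{1,\dots,6\}$ and tabulates the possible support patterns ad hoc, whereas you derive the closed form $f_4(x,t)\equiv |T|\binom{|S|}{3}+|S\cap T|\binom{|S|-1}{2}\pmod 2$ (equivalently $|T\setminus S|\binom{|S|}{3}+|T\cap S|\binom{|S|-1}{3}$, which I verified), so that $c_{\bbP(4)}$ depends only on the type $(|S|,|T|,|S\cap T|)$ and the four bad types can be listed once and for all; your list $\{(2,2,0),(2,3,1),(3,2,1),(3,3,1)\}$ is exactly the set of configurations the paper must exclude in its cases (3) and (4). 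Second, the paper excludes those configurations by inspecting Table~\ref{tab:16h-vectors} and asserting properties of the H-function (e.g.\ ``one sees the fact that $|x+y|\in\{2,6\}$''), while your block decomposition of $H$ --- block $0$ the full power set of the last three coordinates, each block $k\ge 1$ the odd-weight subsets of a $4$-bit window, with the recursion $H(i)=2^4H(i-8)$ keeping the windows pairwise disjoint --- turns these assertions into an explicit structural lemma that is uniform in $n$ and also handles the truncated final block ($2^\beta$ elements when $n=4\alpha+\beta$, $\beta<3$) cleanly. The two proofs verify the same facts; yours packages the $H$-specific step as a stated lemma rather than a table inspection, which is the more robust write-up.
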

\begin{proof}
First of all, we observe from Table \ref{tab:16h-vectors} that
the weight $|\mathbf{h}_i|$ of each $\mathbf{h}_i$ is at most 3, for all $1 \le i \le 16.$
For any $i \ge 17,$ then there exists a unique integer $k$ such that $9 \le i-8k \le 16.$
Then by the definition of the H-function and the construction of $H,$
it is not hard to see that $|\mathbf{h}_i|=|\mathbf{h}_{i-8k}|.$
Therefore, $|\mathbf{h}| \in \{0,1,2,3\}, \ \forall \mathbf{h} \in H.$
It follows that $|x+y| \in \{1,2,3,4,5,6\}$ for all $x \ne y \in H.$
\par
Next note that, for our purpose it is enough to verify that for all $x \ne y \in H,$
$$c_{\bbP(4)}(x,y)=\a_\bbO(x+y)+\b_4(x,y)=\a_\bbO(x+y)
+\sum\limits_{\substack{{\rm distinct}\ i,j,k,l\\ i<j<k}}(x_ix_jx_ky_l+y_iy_jy_kx_l)=1.$$
In the following we carry out the verification case by case according to $|x+y|.$
\par
\begin{enumerate}
%%%%%%%%%%%%%%%%%%%%%%%%%%%%%%%%%%%
\item $|x+y|=1.$
%%%%%%%%%%%%%%%%%%%%%%%%%%%%%%%%%%%
In this case, according to the symmetry of $c_{\bbP(4)}(x,y)$ we may assume without loss of generality that
$0=x_{i_1} \ne y_{i_1}=1$ and $x_j = y_j, \ \forall j \ne i_1.$
Moreover, there exist at most two other indices $i_2$ and $i_3$
such that $x_{i_2}=x_{i_3}=1=y_{i_2}=y_{i_3}$ as $|x|,|y| \in \{0,1,2,3\}.$
Then we have the following table which includes all the indices $i$ such that $x_i$ or $y_i$ is $1:$
\begin{equation*}
\begin{array}{c|ccc}
  & i_1 & i_2 & i_3 \\ \hline
x & 0   & a_1 & a_2 \\ \hline
y & 1   & a_1 & a_2 \\
\end{array}
\end{equation*}
Note that in the above table $a_1$ or $a_2$ may be $0.$
Now by direct computation, we have $\a_\bbO(x+y)=1$ by \eqref{tab:a_bbO} and
$\b_4(x,y)=0$ due to the fact that $|x| \le 2, \ |y| \le 3$ and $x_i=0$ if $i \notin \{i_2, i_3\}.$
%%%%%%%%%%%%%%%%%%%%%%%%%%%%%%%%%%%
\item $|x+y|=2.$
%%%%%%%%%%%%%%%%%%%%%%%%%%%%%%%%%%%
By a similar analysis, we have the tables of information for $x$ and $y:$
\begin{equation*}
\begin{array}{c|ccc}
  & i_1 & i_2 & i_3 \\ \hline
x & 0   & 0   & a_1 \\ \hline
y & 1   & 1   & a_1 \\
\end{array}\quad\textrm{or}\quad
\begin{array}{c|cccc}
  & i_1 & i_2 & i_3 & i_4 \\ \hline
x & 0   & 1   & a_1 & a_2 \\ \hline
y & 1   & 0   & a_1 & a_2 \\
\end{array}
\end{equation*}
It is not hard to see that $\a_\bbO(x+y)=1$ and $\b_4(x,y)=0$.
%%%%%%%%%%%%%%%%%%%%%%%%%%%%%%%%%%%
\item $|x+y|=3.$
%%%%%%%%%%%%%%%%%%%%%%%%%%%%%%%%%%%
As before, the tables of information for $x$ and $y$ are as follows:
\begin{equation*}
\begin{array}{c|ccc}
  & i_1 & i_2 & i_3 \\ \hline
x & 0   & 0   & 0   \\ \hline
y & 1   & 1   & 1   \\
\end{array}\quad\textrm{or}\quad
\begin{array}{c|cccc}
  & i_1 & i_2 & i_3 & i_4 \\ \hline
x & 0   & 0   & 1   & a_1 \\ \hline
y & 1   & 1   & 0   & a_1 \\
\end{array}
\end{equation*}
In the first case, it is obvious that $\a_\bbO(x+y)=1$ and $\b_4(x,y)=0.$
For the second, when $a_1=0,$ again one has $\a_\bbO(x+y)=1$ and $\b_4(x,y)=0.$
If $a_1$ were $1,$ then $|x|=2$ and $x \in \{ \mathbf{h}_4,\mathbf{h}_6,\mathbf{h}_7 \}$
as we can read from Table \ref{tab:16h-vectors} and
deduce from Definition \ref{def:hfunction} of the H-function $H(n)$ that $|\mathbf{h}_i| \in \{1,3\}$ when $i \ge 9.$
However, in this case we can not find a $y \in H$ such that it matches the second table.
%%%%%%%%%%%%%%%%%%%%%%%%%%%%%%%%%%%
\item $|x+y|=4.$
%%%%%%%%%%%%%%%%%%%%%%%%%%%%%%%%%%%
Similar to case (3). The tables of information for $x$ and $y$ are as follows:
\begin{equation*}
\begin{array}{c|cccc}
  & i_1 & i_2 & i_3 & i_4 \\ \hline
x & 0   & 1   & 1   &   1 \\ \hline
y & 1   & 0   & 0   &   0 \\
\end{array}\quad\textrm{or}\quad
\begin{array}{c|ccccc}
  & i_1 & i_2 & i_3 & i_4 & i_5 \\ \hline
x & 0   & 0   & 1   & 1   & a_1 \\ \hline
y & 1   & 1   & 0   & 0   & a_1 \\
\end{array}
\end{equation*}
In the first case, it is obvious that $\a_\bbO(x+y)=0$ and $\b_4(x,y)=y_{i_1} x_{i_2} x_{i_3} x_{i_4}=1.$
For the second case if $a_1$ were $0,$ then $|x|=|y|=2$ and $x,y \in \{ \mathbf{h}_4,\mathbf{h}_6,\mathbf{h}_7\},$
so one has $|x+y|=2$, which contradicts our assumption;
if $a_1$ were $1,$ then $x, y\in\{ \mathbf{h} \in H \mid |\mathbf{h}|=3\}.$
However, by Table \ref{tab:16h-vectors} and Definition \ref{def:hfunction}, one sees the fact that $|x+y| \in \{2,6\},$
which is also a contradiction.
%%%%%%%%%%%%%%%%%%%%%%%%%%%%%%%%%%%
\item $|x+y|=5.$
%%%%%%%%%%%%%%%%%%%%%%%%%%%%%%%%%%%
The table of information for $x$ and $y$ is of the form:
\begin{equation*}
\begin{array}{c|ccccc}
  & i_1 & i_2 & i_3 & i_4 & i_5 \\ \hline
x & 0   & 0   & 1   & 1   &   1 \\ \hline
y & 1   & 1   & 0   & 0   &   0 \\
\end{array}
\end{equation*}
Hence in this case one has $\a_\bbO(x+y)=1$ and
$\b_4(x,y)=(y_{i_1}+y_{i_2}) x_{i_3} x_{i_4} x_{i_5}=0.$
%%%%%%%%%%%%%%%%%%%%%%%%%%%%%%%%%%%
\item $|x+y|=6.$
%%%%%%%%%%%%%%%%%%%%%%%%%%%%%%%%%%%
The table of information for $x$ and $y$ must be like:
\begin{equation*}
\begin{array}{c|cccccc}
  & i_1 & i_2 & i_3 & i_4 & i_5 & i_6 \\ \hline
x & 0   & 0   & 0   & 1   & 1   & 1   \\ \hline
y & 1   & 1   & 1   & 0   & 0   & 0   \\
\end{array}
\end{equation*}
Again, by direct calculation one has $\a_\bbO(x+y)=1$ and $\b_4(x,y)=3+3=0.$
\end{enumerate}
In summary, $c_{\bbP(4)}(x,y)=1, \ \forall x \ne y \in H$ always holds, then the proof is complete.
\end{proof}
\begin{cor}
\label{cor:unified}
Keep all the assumptions of Theorem \ref{thm:unified}.
Then we have the following explicit expressions of the Hurwitz-Radon square identities
$$\left(\sum_{x \in H} a_x^2\right) \left(\sum_{y\in \Z_2^n} b_y^2\right)
=\sum_{z\in \Z_2^n} c_z^2, $$ where
$$c_z=\sum_{x\in H} (-1)^{f_{\bbP(4)}(x,x+z)} a_x b_{x+z}.$$
\end{cor}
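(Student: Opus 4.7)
The plan is to derive the corollary as a direct unpacking of Theorem \ref{thm:unified} through the general machinery set up in Subsection \ref{subsec:multiplicative}. Theorem \ref{thm:unified} shows that $H$ is a Hurwitzian set in $\bbP_n(4)$, which by definition means that $(H, \Z_2^n)$ forms a multiplicative pair in $\bbP_n(4)$. Hence the defining identity \eqref{eq:NormProd} of a multiplicative pair applies to arbitrary elements $a=\sum_{x\in H} a_x u_x$ and $b=\sum_{y\in \Z_2^n} b_y u_y$, yielding $\cN(a)\,\cN(b) = \cN(a\cdot b)$.

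The next step is to rewrite both sides in coordinates. By the definition \eqref{eq:Euclid} of the Euclidean norm, the left-hand side is immediately $\bigl(\sum_{x\in H} a_x^2\bigr)\bigl(\sum_{y\in \Z_2^n} b_y^2\bigr)$. For the right-hand side, one notes first that the sumset satisfies $H+\Z_2^n=\Z_2^n$ (since $\Z_2^n$ is a group), so writing $a\cdot b = \sum_{z\in\Z_2^n} c_z u_z$ and using the twisted multiplication $u_x\cdot u_y = F_{\bbP(4)}(x,y)\,u_{x+y}$ together with $F_{\bbP(4)}(x,y)=(-1)^{f_{\bbP(4)}(x,y)}$ gives, exactly as in formula \eqref{eq:SqId},
\[
c_z = \sum_{x\in H} (-1)^{f_{\bbP(4)}(x,x+z)}\,a_x\,b_{x+z}.
\]
Applying $\cN$ then produces $\sum_{z\in\Z_2^n} c_z^2$, and the asserted identity follows.

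There is essentially no substantive obstacle: the combinatorial work has already been absorbed into the verification of Theorem \ref{thm:unified}, and what remains is a bookkeeping exercise that reads off the general square-identity template \eqref{eq:SqId} in the special case $A=H$, $B=\Z_2^n$. The only minor point worth stating explicitly is that $H+\Z_2^n = \Z_2^n$, so that the index set for $z$ on the right-hand side is the full group, and the sum over $(x,x+z)\in H\times \Z_2^n$ collapses to a sum over $x\in H$.
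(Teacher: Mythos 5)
Your proposal is correct and matches the paper's intent: the corollary is stated there without a separate proof precisely because it is the square-identity template \eqref{eq:SqId} specialized to the multiplicative pair $(H,\Z_2^n)$ furnished by Theorem \ref{thm:unified}, together with the observation that $H+\Z_2^n=\Z_2^n$. Your bookkeeping of the norm computation and the coefficient formula for $c_z$ is exactly the intended argument.
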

%%%%%%%%%%%%%%%%%%%%%%%%%%%%%%%%%
%%%%%%%%%%%%%%%%%%%%%%%%%%%%%%%%%
\section{The Yuzvinsky Admissible Triples}\label{sec:Yuzvinsky}
%%%%%%%%%%%%%%%%%%%%%%%%%%%%%%%%%
%%%%%%%%%%%%%%%%%%%%%%%%%%%%%%%%%
In this section, by using various types of $\Z_2^n$-graded quasialgebras,
we recover the \emph{Yuzvinsky-Lam-Smith Formulas} in a simpler way,
and moreover we confirm the third family of admissible triples
$$\left[2n+2, 2^n-4\binom{n-2}{\dfrac{n-2}{2}}, 2^n\right],\quad n \ge 4\ \hbox{and}\ n\equiv 2\mod 4$$
foreseen by Yuzvinsky in 1984. Throughout this section,
$n$ is always assumed to be $\ge 4$ (in order to make sure that $2n+2<2^n$).
%%%%%%%%%%%%%%%%%%%%%%%%%%%%%%%%%
\subsection{Basic ideas}
%%%%%%%%%%%%%%%%%%%%%%%%%%%%%%%%%
The three families of admissible triples proposed in \cite{Y2} by Yuzvinsky are
in the neighbourhood of the Hurwitz-Radon triples.
They can be constructed via the latter by adding $1$ or $2$ to the first component and subtracting the second accordingly.
Thus, to obtain such kind of triples via multiplicative pairs,
we should expand the Hurwitzian sets and detect their companions
with a help of the test functions in suitable $\Z_2^n$-graded quasialgebras.
\par
The procedure we adopt goes as follows.
Take some $\Z_2^n$-graded quasialgebra $\mathbb{A}$ with twisting function $f$
in which Hurwitzian sets are known to exist, for examples $\bbO_n$ and $\bbP_n(4),$ and choose a Hurwitzian set $H.$
Then add one or two appropriate elements to $H,$ say $H \bigcup\{z_1\}$ or $H \bigcup\{z_1,z_2\},$
and detect its companion set $B.$ This relies essentially on the test function.
In order to make the cardinality $\card B$ relatively big and keep the computation under control,
we shall make delicate manipulation on the twisting function.
More precisely, we perturb the twisting function $f$ to $f'$ such that $f'|_{H\times\Z_2^n}=f.$
Denote $\delta_i(y)=f'(z_i,y)+f(z_i,y), \ \forall y\in \Z_2^n$ for $i=1,2$,
and call them the \emph{perturbation functions}.
The crux of our method lies in adding to $H$ and detecting $B$ by the symmetry in $\Z_2^n,$
and furthermore in manipulating the twisting functions and choosing the added elements in a consistent way.
%%%%%%%%%%%%%%%%%%%%%%%%%%%%%%%%%
\subsection{Another Hurwitzian set in $\bbP_n(4)$}\label{subsec:anotherhset}
%%%%%%%%%%%%%%%%%%%%%%%%%%%%%%%%%
For our purpose of this section and the next, we need another Hurwitzian set in $\bbP_n(4)$
which is different from the one given in Section \ref{sec:HRSIR}.
\par
To describe the elements in $\Z_2^n$, in the rest of the paper we use the following notations:
\begin{equation*}
\begin{array}{rcl}
e_0            & \triangleq & (0,0,\ldots,0),            \\
\overline{e_0} & \triangleq & (1,1,\ldots,1),            \\
e_i            & \triangleq & (0,\ldots,0,1,0,\ldots,0),
\textrm{ where 1 only occurs at the $i$-th position,}    \\
\overline{e_i}&\triangleq& \overline{e_0}+e_i=(1,\ldots,1,0,1,\ldots,1),
\textrm{ where 0 only occurs at the $i$-th position,}
\end{array}
\end{equation*} for all $1\leq i\leq n$.
For any $B \subseteq \Z_2^n,$ let $\overline{B}=\overline{e_0}+B=\{\overline{e_0}+y \mid y \in B \}.$
We then introduce the following subset $H$ of $\Z_2^n$:
\begin{equation}
\label{eq:anotherhset}
H=\left\{
\begin{array}{ll}
\{e_0, e_i, \overline{e_i}                 \mid 1\leq i\leq n\}, & \textrm{ for } n\equiv 0 \mod 4.\\
\{e_1+e_i, e_i                             \mid 1\leq i\leq n\}, & \textrm{ for } n\equiv 1 \mod 4,\\
\{e_1+e_i, e_i                             \mid 1\leq i\leq n\}, & \textrm{ for } n\equiv 2 \mod 4,\\
\{e_0, \overline{e_0}, e_i, \overline{e_i} \mid 1\leq i\leq n\}, & \textrm{ for } n\equiv 3 \mod 4.\\
\end{array}\right.
\end{equation}
In each case, the subset $H$ contains exactly $\rho(2^n)$ elements.
\begin{thm}\label{thm:anotherhset}
$H$ is a Hurwitzian set in $\bbP_n(4).$
\end{thm}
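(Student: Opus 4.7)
The plan is to invoke Corollary~\ref{cor:linear}: since $f_{\bbP(4)}$ is linear in its second argument, it suffices to check
\[
c_{\bbP(4)}(x,t) \;=\; \a_\bbO(x+t) + \b_4(x,t) \;=\; 1 \qquad \forall\, x \ne t \in H,
\]
where the decomposition $c_{\bbP(4)} = \a_\bbO(x+t) + \b_4$ and the identification $\b_4(x,t) = f_4(x,t) + f_4(t,x)$ are exactly those established at the beginning of the proof of Theorem~\ref{thm:unified}. I would run the verification by splitting on $n \mod 4$ and, within each residue class, on the combinatorial type of the pair $(x,t)$ drawn from~\eqref{eq:anotherhset}.

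The cases $n \equiv 1, 2 \mod 4$ are essentially immediate. Every element of $H = \{e_i,\; e_1 + e_j : 1 \le i, j \le n\}$ has Hamming weight at most~$2$, so the sum defining $f_4$, which requires three distinct $1$-positions in the first argument, forces $f_4(x,t) = f_4(t,x) = 0$ and hence $\b_4 = 0$. A short direct inspection then shows $|x+t| \in \{1,2,3\}$ for every pair, so $\a_\bbO(x+t) = 1$ by the periodic table~\eqref{tab:a_bbO} and the identity is verified.

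The substantive work is in $n \equiv 0, 3 \mod 4$, where $H$ contains the heavy vectors $\overline{e_i}$ (weight $n-1$) and, for $n \equiv 3$, also $\overline{e_0}$ (weight $n$). For these pairs the plan is to evaluate $f_4$ explicitly by counting the quadruples $(i_1 < i_2 < i_3;\, l)$ whose coordinates all contribute~$1$; for the special arguments drawn from $\{e_0, \overline{e_0}, e_i, \overline{e_i}\}$ this reduces to simple binomial coefficients such as $\binom{n-1}{3}$, $\binom{n-2}{3}$, $\binom{n-1}{2}$ together with their products and sums. Their parities are then read off by Lucas's theorem: for $n \equiv 0 \mod 4$ one has $\binom{n-1}{3}$ odd and $\binom{n-2}{3}$ even, while for $n \equiv 3 \mod 4$ both are even. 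Matched against the values of $\a_\bbO(x+t)$ dictated by $|x+t|$ (readily computed for any two members of~$H$ written out in the $e_0, e_i, \overline{e_0}, \overline{e_i}$ notation), these parities force $c_{\bbP(4)}(x,t) = 1$ in every subcase.

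The principal obstacle is bookkeeping rather than conceptual: the subcases must be exhaustively enumerated and each matched with the correct binomial. The most delicate pair is $(e_i, \overline{e_i})$ for $n \equiv 0 \mod 4$, where $|x+t| = n$ forces $\a_\bbO(x+t) = 0$ and the identity $c_{\bbP(4)} = 1$ hinges precisely on $\binom{n-1}{3}$ being odd in this residue class. This also clarifies \emph{why} the prescription for $H$ splits along $n \mod 4$: each residue demands a different balancing between the periodic $\a_\bbO$-piece and the combinatorial $\b_4$-piece, and the four lines of~\eqref{eq:anotherhset} are exactly the ones that achieve this balance while still yielding the maximal number $\r(2^n)$ of elements.
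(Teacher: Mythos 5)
Your proposal is correct and follows essentially the same route as the paper: reduce via the test function $c_{\bbP(4)}(x,t)=\a_\bbO(x+t)+\b_4(x,t)$ as in Theorem~\ref{thm:unified}, dispose of all pairs where both arguments have weight $\le 2$ (so $\b_4=0$ and $\a_\bbO(x+t)=1$), and for $n\equiv 0,3\bmod 4$ match the parities of $\binom{n-1}{3}$, $\binom{n-2}{3}$ and related counts against the $4$-periodic values of $\a_\bbO$. The paper's proof is exactly this case-by-case tabulation, including your observation that $(e_i,\overline{e_i})$ with $n\equiv 0\bmod 4$ is the one case where $\a_\bbO(x+t)=0$ and the oddness of $\binom{n-1}{3}$ carries the identity.
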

\begin{proof}
Similar to Theorem \ref{thm:unified}, we need to calculate the test function $c_{\bbP_n(4)}(x,y)=\a_\bbO(x+y)+\b_4(x,y)$
and verify that $\a_\bbO(x+y)$ and $\b_4(x,y)$ always have different values for any two distinct elements $x, y\in H.$
If $(x,y)$ is one of the following
$$(e_0,\overline{e_0}),\ (e_0,e_i),\ (e_0,\overline{e_i}),\ (e_i,e_j),\ (e_i,e_1+e_j),\ (e_1+e_i,e_1+e_j),
\quad 1\leq i, j\leq n, $$
then it is obvious that $\b_4(x,y)=0$ and $\a_\bbO(x+y)=1$.
And the following direct case by case calculations
\begin{align*}
c_{\bbP_n(4)}(e_i,\overline{e_i})&=\a_\bbO(\overline{e_0})+\binom{n-1}{3}=1,
             &\textrm{ while } n\equiv 0,3\mod 4;    \\
c_{\bbP_n(4)}(e_i,\overline{e_j})&=\a_\bbO(\overline{e_0}+e_i+e_j)+\binom{n-2}{3}=1,
             &\textrm{ while } n\equiv 0,3\mod 4;    \\
c_{\bbP_n(4)}(e_i,\overline{e_0})&=\a_\bbO(\overline{e_0}+e_i)+\binom{n-1}{3}=1,
             &\textrm{ while } n\equiv 0,3\mod 4;    \\
c_{\bbP_n(4)}(\overline{e_0},\overline{e_j})&=\a_\bbO(e_j)+n\binom{n-1}{3}+(n-1)\binom{n-2}{3}=1,
             &\textrm{ while } n\equiv 3\quad\mod 4; \\
c_{\bbP_n(4)}(\overline{e_i},\overline{e_j})&=\a_\bbO(e_i+e_j)+2\binom{n-1}{3}+2(n-2)\binom{n-2}{3}=1,
             &\textrm{ while } n\equiv 0,3\mod 4 \
\end{align*}
will complete the proof.
\end{proof}
\par
Now we are ready to construct the three families of Yuzvinsky admissible triples.
The construction is split into three cases according to the residue of $n$ modulo $4.$
%%%%%%%%%%%%%%%%%%%%%%%%%%%%%%%%%
\subsection{The case in which $n \equiv 0 \mod 4$}\label{subsec:Yuzvinsky0}
%%%%%%%%%%%%%%%%%%%%%%%%%%%%%%%%%
In this case, let $f=f_{\bbP(4)}$ and $H=\{e_0, e_i, \overline{e_i} \mid 1\leq i\leq n\}$.
We need to add an element $z$ to $H$ and perturb the twisting function $f$ to $f'$ by $\delta.$
According to Corollary \ref{cor:multieq}, one easily obtains the following lemma:
\begin{lem}
$(H\bigcup\{z\}, B)$ forms a multiplicative pair with respect to $f'$
if and only if for all $h\in H$ and for all $y \in B$ such that $h+z+y\in B$,
the following equation
\begin{equation*}
f'(z,y)+f'(z,h+z+y)=f(h,h+z)+1,
\end{equation*}
or equivalently
\begin{equation}\label{eq:delta0}
\delta(y)+\delta(h+z+y)=f(h,h+z)+f(z,h+z)+1
\end{equation} holds.
\end{lem}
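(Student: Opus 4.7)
The plan is to deduce the lemma directly from Corollary \ref{cor:multieq} applied to the pair $(H \cup \{z\}, B)$ with twisting function $f'$. That corollary asserts that multiplicativity is equivalent to
\begin{equation*}
f'(x,y)+f'(t,y)+f'(x,x+t+y)+f'(t,x+t+y)=1
\end{equation*}
holding for every distinct pair $x, t \in H \cup \{z\}$ and every $y \in B$ with $x+t+y \in B$. I would split the verification into two cases according to whether both $x, t$ lie in $H$ or exactly one of them equals $z$.

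If both $x, t$ lie in $H$, then $f'$ agrees with $f = f_{\bbP(4)}$ at all four arguments, and the required identity reduces to the condition witnessing that $H$ is a Hurwitzian set in $\bbP_n(4)$. By Theorem \ref{thm:anotherhset} this identity already holds for every $y \in \Z_2^n$, hence a fortiori for the restricted $y$ arising here, so this case imposes no new constraint on $f'$.

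The substantive case is $x = z$, $t = h \in H$. Since $f'(h,\cdot) = f(h,\cdot)$, the two terms $f(h,y) + f(h,h+z+y)$ can be telescoped via the linearity of $f_{\bbP(4)}$ in its second argument to $f(h,h+z)$. The multiplicativity condition thus becomes
\begin{equation*}
f'(z,y) + f'(z, h+z+y) = f(h, h+z) + 1,
\end{equation*}
which is the first form stated in the lemma. Writing $f'(z,\cdot) = f(z,\cdot) + \delta(\cdot)$ and using linearity of $f$ once more to obtain $f(z,y) + f(z,h+z+y) = f(z, h+z)$ converts this into the equivalent $\delta$-form \eqref{eq:delta0}.

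The main subtlety, rather than a genuine obstacle, is to recognize that the linearity of $f$ in its second argument is precisely what is needed to collapse the $y$-dependent $f$-contributions at the two ends of the multiplicativity identity, leaving a relation involving only the perturbation $\delta$ and the fixed pair $(z,h)$. One should also note that the perturbation $\delta$ itself need not be linear, which is why one cannot shortcut the argument using Corollary \ref{cor:linear} applied to $f'$ and must instead work with the $y$-dependent form of Corollary \ref{cor:multieq}.
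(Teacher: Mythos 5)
Your proposal is correct and follows exactly the route the paper intends: the paper gives no written proof beyond the remark that the lemma follows from Corollary \ref{cor:multieq}, and your case split (both elements in $H$ being handled by the Hurwitzian property of $H$, the mixed case collapsing via linearity of $f_{\bbP(4)}$ in its second argument to the stated $\delta$-condition) is precisely the computation being alluded to. Your closing observation that $\delta$ need not be linear, so one cannot invoke Corollary \ref{cor:linear} for $f'$ directly, is a correct and worthwhile clarification.
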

\par
We observe that if we choose $z=\overline{e_0},$
then $H\bigcup\{z\}$ has a nice symmetry and the equation \eqref{eq:delta0} is easier to handle.
In this situation, one has
\begin{lem}\label{lem:3delta0}
$(H\bigcup\{\overline{e_0}\}, B)$ is a multiplicative pair
if and only if for all $i=1,\cdots,n$ the following equations hold:
\begin{align}
\label{eq:3delta01}
& \delta(y)+\delta(\overline{y})=1,      \textrm{ whenever } y, \overline{y} \in B; \\
\label{eq:3delta02}
& \delta(y)+\delta(e_i+\overline{y})=1,  \textrm{ whenever } y, e_i+\overline{y} \in B; \\
\label{eq:3delta03}
& \delta(y)+\delta(e_i+y)=0,             \textrm{ whenever } y, e_i+y \in B.
\end{align}
\end{lem}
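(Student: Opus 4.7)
The plan is to apply the preceding lemma with $z = \overline{e_0}$ and to split the universal quantifier over $h \in H = \{e_0, e_i, \overline{e_i} : 1 \le i \le n\}$ into the three natural cases $h = e_0$, $h = e_i$, and $h = \overline{e_i}$. For these three choices one has $h + z = \overline{e_0},\ \overline{e_i},\ e_i$ respectively, so $h+z+y$ runs through the three shifts $\overline{y}$, $e_i + \overline{y}$, $e_i + y$ that appear in the statement. Hence the equivalence reduces to verifying that, under $n \equiv 0 \bmod 4$, the right-hand side $f(h, h+z) + f(z, h+z) + 1$ of \eqref{eq:delta0} equals $1$, $1$, $0$ respectively, thereby producing exactly \eqref{eq:3delta01}, \eqref{eq:3delta02}, and \eqref{eq:3delta03}.

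I would next recognize $f(h,h+z) + f(z,h+z)$ as the test function $c_{\bbP(4)}(h,z)$ from \eqref{eq:comptest} and use the decomposition $c_{\bbP(4)} = \a_\bbO + \b_4$ already exploited in Theorem \ref{thm:unified}. The $\a_\bbO$ part is read off the 4-periodic table \eqref{tab:a_bbO} at weights $n$, $n-1$, $1$, giving $0$, $1$, $1$. The $\b_4$ part is a counting exercise: $f_4(x,y)$ counts triples of indices drawn from the support of $x$ paired with one further index drawn from the support of $y$ and outside the triple. This immediately gives $\b_4(e_0, \overline{e_0}) = 0$, $\b_4(e_i, \overline{e_0}) = \binom{n-1}{3}$, and, after a short rearrangement,
\[
\b_4(\overline{e_i}, \overline{e_0}) \equiv (n-3)\binom{n}{3} + (n-4)\binom{n-1}{3} \pmod 2.
\]

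The main obstacle, and the only nontrivial step, is the parity analysis of these binomials under $n \equiv 0 \bmod 4$. I would argue that $\binom{n-1}{3}$ is odd (by Lucas, since $n-1 = 4k-1$ has its last two binary digits equal to $1$), that $n-4$ is even, and that $\binom{n}{3} = n(n-1)(n-2)/6$ is even (the factor $n$ absorbs the denominator). Combined with the $\a_\bbO$ values recorded above, this yields $c_{\bbP(4)}(e_0,\overline{e_0}) = 0$, $c_{\bbP(4)}(e_i,\overline{e_0}) = 1+1 = 0$, and $c_{\bbP(4)}(\overline{e_i},\overline{e_0}) = 1+0 = 1$, whose $+1$ shifts are $1$, $1$, $0$ as required. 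Both directions of the ``iff'' then follow at once from the ``iff'' of the preceding lemma applied case by case.
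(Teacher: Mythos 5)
Your proposal is correct and follows essentially the same route as the paper: the paper's proof is exactly the one-line instruction to let $h$ run over $e_0$, $e_i$, $\overline{e_i}$ in equation \eqref{eq:delta0} and compute the right-hand sides, and you have simply carried out those direct calculations, using the same decomposition $c_{\bbP(4)}=\a_\bbO+\b_4$ that the paper employs in Theorems \ref{thm:unified} and \ref{thm:anotherhset}. Your parity bookkeeping (in particular $c_{\bbP(4)}(e_0,\overline{e_0})=c_{\bbP(4)}(e_i,\overline{e_0})=0$ and $c_{\bbP(4)}(\overline{e_i},\overline{e_0})=1$ for $n\equiv 0\bmod 4$) checks out and yields precisely the right-hand sides $1,1,0$ of \eqref{eq:3delta01}--\eqref{eq:3delta03}.
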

\begin{proof}
Let $h$ take turns to be $e_0$, $e_i$ and $\overline{e_i}$.
Direct calculations of the right hand side of equation \eqref{eq:delta0} yield the above equations.
\end{proof}
\par
Next we shall choose a suitable companion set $B$ for $H\bigcup\{\overline{e_0}\}$
and a proper perturbation function $\delta.$ By \eqref{eq:3delta01},
we should assume that $B$ is symmetric with respect to $\overline{e_0},$
i.e., there exists a subset $B_1 \subset B$ such that $B= B_1 \biguplus \overline{e_0}+B_1.$
Here and below, $\biguplus$ stands for disjoint union.
By the symmetry of $\Z_2^n$ with respect to the weight of its elements, we may further assume that
$B_1$ is a subset of $\{y\in \Z_2^n \mid |y|\leq \frac{n}{2}\}.$
Accordingly, $\overline{e_0}+B_1$ is a subset of $\{y\in \Z_2^n \mid |y|\geq \frac{n}{2}\}$.
In addition, note that equations \eqref{eq:3delta02} and \eqref{eq:3delta03} are symmetric
in the view of \eqref{eq:3delta01}, so we only need to solve the three equations for $y\in B_1$.
\par
After these preparations, now we are able to state the first Yuzvinsky-Lam-Smith formula.
\begin{thm}
\label{thm:YLS0}
There exists a series of admissible triples of sizes
$$\left[ 2n+2, 2^n-\binom{n}{n/2}, 2^n\right], \quad\textrm{ for } n\geq 4 \textrm{ and } n\equiv 0 \mod 4. $$
\end{thm}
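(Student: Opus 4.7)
The plan is to implement the procedure of Subsection \ref{subsec:Yuzvinsky0} by augmenting the Hurwitzian set $H = \{e_0, e_i, \overline{e_i} \mid 1 \le i \le n\}$ supplied by Theorem \ref{thm:anotherhset} (for the case $n \equiv 0 \mod 4$) with the single element $z = \overline{e_0}$, producing $A = H \cup \{\overline{e_0}\}$ of cardinality $2n+2$. The choice $z = \overline{e_0}$ is made precisely to preserve the involution $y \mapsto \overline{y}$ on $A$, which is the symmetry exploited by Lemma \ref{lem:3delta0}.

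For the companion set I would take $B = \{y \in \Z_2^n : |y| \ne n/2\}$. This set has cardinality $2^n - \binom{n}{n/2}$, is stable under $y \mapsto \overline{y}$, and decomposes as $B = B_1 \biguplus (\overline{e_0} + B_1)$ with $B_1 = \{y \in \Z_2^n : |y| < n/2\}$, matching the template set up before the theorem. The accompanying perturbation function will be the indicator of \emph{large weight},
\[ \delta(y) = \begin{cases} 0, & |y| < n/2, \\ 1, & |y| > n/2, \end{cases} \]
which is well-defined on $B$ since $B$ contains no element of weight $n/2$.

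The heart of the argument is then to check the three equations of Lemma \ref{lem:3delta0} for this $\delta$. Equation \eqref{eq:3delta01} is immediate: $|y| + |\overline{y}| = n$, so for $y \in B$ the weights of $y$ and $\overline{y}$ lie on opposite sides of $n/2$ and $\delta(y) + \delta(\overline{y}) = 1$. For \eqref{eq:3delta03}, the weights $|y|$ and $|e_i + y|$ differ by exactly one; when both lie in $B$ they cannot straddle $n/2$, since $n/2$ is excluded, so $\delta(y) + \delta(e_i + y) = 0$. Equation \eqref{eq:3delta02} is reducible to the previous two via the involution $y \mapsto \overline{y}$. Once multiplicativity of $(A, B)$ is established, the sumset is automatically all of $\Z_2^n$: since $e_0 \in A$ we have $B \subseteq A + B$, and any $w$ with $|w| = n/2$ satisfies $w = e_1 + (e_1 + w)$ with $e_1 \in A$ and $|e_1 + w| \in \{n/2 - 1, n/2 + 1\}$, so $e_1 + w \in B$.

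The main obstacle will be the careful handling of the boundary layers $|y| = n/2 \pm 1$: a shift by $e_i$ in some positions may send $y$ to weight $n/2$, placing the translate outside $B$ and rendering the relevant instance of \eqref{eq:3delta03} or \eqref{eq:3delta02} vacuous, whereas in other positions the translate stays in $B$ and the equation is substantive. These borderline instances must be separated from the generic ones, and one must confirm that whenever both sides of the equation genuinely lie in $B$ the prescribed relation does hold. Apart from this bookkeeping, the verification reduces to elementary weight counting, and it yields the admissible triple $[2n+2,\, 2^n - \binom{n}{n/2},\, 2^n]$ as required.
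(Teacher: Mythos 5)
Your proposal is correct and follows essentially the same route as the paper: the same augmented set $H\cup\{\overline{e_0}\}$, the same companion $B=\{y : |y|\ne n/2\}$, and the same weight-indicator perturbation $\delta$, verified against the three equations of Lemma \ref{lem:3delta0}. The boundary-layer issue you flag is already disposed of by your own observation that translates of weight $n/2$ fall outside $B$ and make those instances vacuous, which is exactly how the paper handles it.
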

\begin{proof}
Let $B_1=\{y\in \Z_2^n \mid |y|<n/2\}$ and let
$$\delta(y)=\left\{\begin{array}{ll}
                     1, & \textrm{ if } |y|\geq n/2+1; \\
                     0, & \textrm{ otherwise. }
                    \end{array}\right.$$
Then $B=\{y\in \Z_2^n \mid |y|\ne n/2\}, \ H\bigcup\{\overline{e_0}\} + B =\Z_2^n$ and $\card B = 2^n-\binom{n}{n/2}.$
For any $y\in B_1$, we have $|y|\leq n/2-1,\ |\overline{y}|\geq n/2+1,$
and thus $\delta(y)=0,\ \delta(\overline{y})=1$.
Also, $|e_i+\overline{y}|> n/2$ when $e_i+\overline{y} \in B$ and $|e_i+y| < n/2$ when $e_i+y \in B$
hold for any $i=1,\cdots,n$.
Clearly, for all $y\in B_1$ we have $\delta(e_i+\overline{y})=1,\ \delta(e_i+y)=0.$
Now the theorem follows by Lemma \ref{lem:3delta0} and equation \eqref{eq:SqId}.
\end{proof}
%%%%%%%%%%%%%%%%%%%%%%%%%%%%%%%%%
\subsection{The case in which $n\equiv 1\mod 4$}\label{subsec:Yuzvinsky1}
%%%%%%%%%%%%%%%%%%%%%%%%%%%%%%%%%
The method is similar to the previous case.
In this case, $\r(2^n)=2n$ and we need to add two elements into the chosen H-set.
We observe that there is a symmetric H-set $H =\{e_i, \overline{e_i}, 1\leq i\leq n\}$
in the higher octonions $\bbO_n,$ see \cite{MGO}.
It is tempting to expand $H$ by $\{e_0,\overline{e_0}\}$ according to the obvious symmetry.
\par
For brevity, denote $f_\bbO$ by $f$ and perturb it by $\delta_1$ and $\delta_2$ as follows:
\begin{equation*}
f'(x,y)=\left\{\begin{array}{ll}
               f(x,y)+\delta_1(y), & \textrm{ if } x = e_0;            \\
               f(x,y)+\delta_2(y), & \textrm{ if } x = \overline{e_0}; \\
               f(x,y),             & \textrm{ otherwise. }
               \end{array}\right.
\end{equation*}
\par
Similarly, we have the following lemma for the companion set $B$ of $H\bigcup\{e_0,\overline{e_0}\}.$
\begin{lem}
$(H\bigcup\{e_0,\overline{e_0}\}, B)$ forms a multiplicative pair with respect to $f'$
if and only if the following equations hold:
\begin{align*}
& f'(e_0,y)+f'(e_0,y+h)=f(h,h)+1, \quad
  \forall h\in H, \textrm{ whenever } y, y+h\in B; \\
& f'(\overline{e_0},y)+f'(\overline{e_0},h+\overline{y})=f(h,\overline{h})+1, \quad
  \forall h\in H, \textrm{ whenever } y, h+\overline{y}\in B; \\
& f'(e_0,y)+f'(e_0,\overline{y})+f'(\overline{e_0},y)+f'(\overline{e_0},\overline{y})=1, \quad
  \textrm{ whenever } y, \overline{y}\in B,
\end{align*}
or equivalently
\begin{align}
\label{eq:delta11}
& \delta_1(y)+\delta_1(y+h)=f(h,h)+1,\quad
  \forall h\in H, \textrm{ whenever } y, y+h\in B; \\
\label{eq:delta12}
& \delta_2(y)+\delta_2(h+\overline{y})=f(h,\overline{h})+f(\overline{e_0},\overline{h})+1, \quad
  \forall h\in H, \textrm{ whenever } y, h+\overline{y}\in B; \\
\label{eq:delta13}
& \delta_1(y)+\delta_1(\overline{y})+\delta_2(y)+\delta_2(\overline{y})=0, \quad
  \textrm{ whenever } y, \overline{y}\in B.
\end{align}
\end{lem}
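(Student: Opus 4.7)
The plan is to apply Corollary~\ref{cor:multieq} to the pair $(A,B)$ with $A=H\bigcup\{e_0,\overline{e_0}\}$ and twisting function $f'$, then to partition the resulting condition by whether each of $x,t\in A$ lies in $H$ or in $\{e_0,\overline{e_0}\}$. This produces four cases to examine: (i) both $x,t\in H$; (ii) $x=e_0$, $t=h\in H$; (iii) $x=\overline{e_0}$, $t=h\in H$; and (iv) $\{x,t\}=\{e_0,\overline{e_0}\}$. Cases (ii) and (iii) with the roles of $x$ and $t$ exchanged give the same constraints by the symmetry of Corollary~\ref{cor:multieq}.

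For case (i), $f'$ agrees with $f_\bbO$ on $H\times\Z_2^n$, so the required identity is precisely the multiplicativity condition for $(H,B)$. Since $H$ is a Hurwitzian set in $\bbO_n$, the pair $(H,\Z_2^n)$ is multiplicative with respect to $f_\bbO$, hence so is $(H,B)$ for every $B\subseteq\Z_2^n$. Thus case (i) imposes no condition. For cases (ii)--(iv), the key input is that each summand $f_1,f_2,f_3$ of $f_\bbO$ is linear in its second argument, so $f(h,y)+f(h,y+u)=f(h,u)$ for all $h,y,u\in\Z_2^n$. Substituting $x=e_0$ and $t=h$ and applying this linearity collapses the pair of $f(h,\cdot)$ terms into $f(h,h)$, producing the first form of \eqref{eq:delta11}. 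Substituting $x=\overline{e_0}$ and $t=h$, together with the identity $\overline{e_0}+h+y=h+\overline{y}$, yields the first form of \eqref{eq:delta12} after collapsing the $f(h,\cdot)$ pair into $f(h,\overline{h})$. Case (iv) with $x+t+y=\overline{y}$ gives the first form of \eqref{eq:delta13} directly.

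To pass from the $f'$-form to the $\delta$-form, I would substitute $f'(e_0,y)=f(e_0,y)+\delta_1(y)$ and $f'(\overline{e_0},y)=f(\overline{e_0},y)+\delta_2(y)$ and simplify using two facts extracted from the defining formula of $f_\bbO$ and the $\a_\bbO$ table. First, $f(e_0,\cdot)\equiv 0$, which eliminates the $f(e_0,\cdot)$ occurrences in cases (ii) and (iv). Second, by linearity one rewrites $f(\overline{e_0},y)+f(\overline{e_0},h+\overline{y})$ as $f(\overline{e_0},\overline{h})$ in case (iii), contributing the term $f(\overline{e_0},\overline{h})$ to the right-hand side of \eqref{eq:delta12}, and one rewrites $f(\overline{e_0},y)+f(\overline{e_0},\overline{y})$ as $f(\overline{e_0},\overline{e_0})=\a_\bbO(\overline{e_0})$ in case (iv).

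The only substantive arithmetic check is $\a_\bbO(\overline{e_0})=1$ when $n\equiv 1\mod 4$; this follows from the formula $\a_\bbO(x)=\binom{|x|}{1}+\binom{|x|}{2}+\binom{|x|}{3}$ together with the $4$-periodicity recorded in \eqref{tab:a_bbO}, since $|\overline{e_0}|=n\equiv 1\mod 4$. This identity causes the constant ``$+1$'' on the right-hand side of case (iv) to cancel, leaving the homogeneous equation \eqref{eq:delta13}, as opposed to an inhomogeneous one. The restriction $n\equiv 1\mod 4$ is used precisely at this single step, and everything else is bookkeeping of $\Z_2$-valued parity sums; the main obstacle, such as it is, lies in keeping the linearity collapses straight across the four cases and verifying that the substitutions are reversible so that the chain of equivalences can be traversed in both directions.
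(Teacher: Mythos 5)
Your proof is correct and follows exactly the route the paper intends (the paper omits the argument as routine): apply Corollary~\ref{cor:multieq} to $A=H\cup\{e_0,\overline{e_0}\}$, note that the case $x,t\in H$ is vacuous because $H$ is already an H-set in $\bbO_n$, and collapse the remaining cases using linearity of $f_\bbO$ in its second argument together with $f(e_0,\cdot)=0$ and $f(\overline{e_0},\overline{e_0})=\a_\bbO(\overline{e_0})=1$. The only quibble is your closing remark: the hypothesis $n\equiv 1\bmod 4$ is also needed for $H=\{e_i,\overline{e_i}\}$ to be a Hurwitzian set in the first place, not solely for the cancellation in case (iv).
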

\par
Now if we set $\delta=\delta_1=\delta_2$, then equation \eqref{eq:delta13} holds automatically.
In this situation, the previous lemma is reduced to:
\begin{lem}
$(H\bigcup\{e_0,\overline{e_0}\}, B)$ is a multiplicative pair
if and only if for all $i=1,\cdots,n$ the following equations hold:
\begin{align}
\label{eq:2delta11}
& \delta(y)+\delta(y+e_i)=0,\quad \textrm{ whenever } y, y+e_i\in B; \\
\label{eq:2delta12}
& \delta(y)+\delta(y+\overline{e_i})=1, \quad \textrm{ whenever } y, y+\overline{e_i}\in B.
\end{align}
\end{lem}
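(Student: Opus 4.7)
This lemma is a specialization of the previous lemma obtained by imposing $\delta_1 = \delta_2 = \delta$, so my approach is to verify how the three equations \eqref{eq:delta11}, \eqref{eq:delta12}, \eqref{eq:delta13} collapse to the two equations \eqref{eq:2delta11}, \eqref{eq:2delta12} under this specialization.

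First I would note that equation \eqref{eq:delta13} becomes $2\delta(y) + 2\delta(\overline{y}) \equiv 0 \pmod 2$, hence is automatically satisfied, as already observed in the text immediately preceding the lemma. Next I would specialize \eqref{eq:delta11} to each $h \in H = \{e_i, \overline{e_i}\}_{i=1}^n$, using that $f_\bbO(h,h) = \a_\bbO(h)$ depends only on $|h|$ via the $4$-periodic table \eqref{tab:a_bbO}. For $h = e_i$ (weight $1$), $\a_\bbO(h) = 1$, so the equation reads $\delta(y) + \delta(y+e_i) = 0$. For $h = \overline{e_i}$, the hypothesis $n \equiv 1 \pmod 4$ forces $|\overline{e_i}| = n - 1 \equiv 0 \pmod 4$ and thus $\a_\bbO(h) = 0$, giving $\delta(y) + \delta(y + \overline{e_i}) = 1$. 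These are precisely \eqref{eq:2delta11} and \eqref{eq:2delta12}.

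The main step is to show that equation \eqref{eq:delta12} is redundant, i.e., that under the specialization it is implied by the two reduced equations above. Using $e_i + \overline{y} = y + \overline{e_i}$ and $\overline{e_i} + \overline{y} = y + e_i$, the specialization of \eqref{eq:delta12} at $h = e_i$ and $h = \overline{e_i}$ rewrites, respectively, as
\[ \delta(y) + \delta(y + \overline{e_i}) = f_\bbO(e_i, \overline{e_i}) + f_\bbO(\overline{e_0}, \overline{e_i}) + 1, \]
\[ \delta(y) + \delta(y + e_i) = f_\bbO(\overline{e_i}, e_i) + f_\bbO(\overline{e_0}, e_i) + 1. \]
Consistency with \eqref{eq:2delta12} and \eqref{eq:2delta11} therefore reduces to the two parity identities
\[ f_\bbO(e_i, \overline{e_i}) + f_\bbO(\overline{e_0}, \overline{e_i}) \equiv 0 \pmod 2 \quad \mathrm{and} \quad f_\bbO(\overline{e_i}, e_i) + f_\bbO(\overline{e_0}, e_i) \equiv 1 \pmod 2. \]

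I would verify these by direct expansion using the definitions of $f_1, f_2, f_3$: the second identity holds for every $n$ (the $f_3$-contributions cancel, and an odd $f_1$-term survives), while the first exploits the hypothesis $n \equiv 1 \pmod 4$ to annihilate the relevant binomial contributions modulo $2$. The main obstacle is exactly this combinatorial bookkeeping; no conceptual subtlety arises beyond a careful evaluation of the binomial coefficients modulo $2$, which is finite and routine given the explicit formulas.
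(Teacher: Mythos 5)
Your proposal is correct and follows essentially the same route as the paper, whose proof is simply ``by direct calculations'': you specialize \eqref{eq:delta11}--\eqref{eq:delta13} at $\delta_1=\delta_2=\delta$, evaluate $f_\bbO(h,h)=\a_\bbO(h)$ for $h=e_i$ and $h=\overline{e_i}$ using the $4$-periodicity and $n\equiv 1 \bmod 4$, and check that \eqref{eq:delta12} collapses onto the same two equations. The two parity identities you isolate do hold as stated (the second for all $n$, the first using $\binom{n}{2}\equiv (n-1)\binom{n-1}{2}\equiv 0 \bmod 2$ when $n\equiv 1 \bmod 4$), so the remaining bookkeeping is indeed routine.
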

\begin{proof}
By direct calculations.
\end{proof}
\par
Now we put some assumptions on $B$ again due to the obvious symmetry of $\Z_2^n.$
Let $B=B_1 \biguplus \overline{e_0}+B_1$ in which $B_1$ is a subset of $\{y\in \Z_2^n \mid |y|\geq \frac{n+1}{2}\}$.
Meanwhile, $\overline{e_0}+B_1$ is a subset of $\{y\in \Z_2^n \mid |y|\leq \frac{n-1}{2}\}$.
It follows that for any $y\in B$, $e_i+y \in B$ if and only if
$\overline{e_i+y} \in B$.
Thus equation \eqref{eq:2delta12} can be replaced by
\begin{equation}
\label{eq:2delta13}
\delta(e_i+y)+\delta(\overline{e_i+y})=1, \quad \textrm{ whenever } y, e_i+y, \overline{e_i+y}\in B.
\end{equation}
Again, we assume the symmetric condition $\delta(y)+\delta(\overline{y})=1, \ \forall y \in B$
on the perturbation function. Then equation \eqref{eq:2delta13} will hold automatically.
\par
The following is the second Yuzvinsky-Lam-Smith formula.
\begin{thm}
\label{thm:YLS1}
There exists a series of admissible triples of sizes
\[\left[ 2n+2, 2^n-2\binom{n-1}{\dfrac{n-1}{2}}, 2^n\right],
\quad\textrm{ for } n\geq 4 \textrm{ and } n\equiv 1 \mod 4.\]
\end{thm}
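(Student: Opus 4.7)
The plan is to follow the framework set up above: find a companion set $B\subset\Z_2^n$ and a single perturbation function $\delta$ so that equation \eqref{eq:2delta11} and the antisymmetry $\delta(y)+\delta(\overline{y})=1$ (which, as noted, makes \eqref{eq:2delta12} automatic) are both satisfied, with $|B|=2^n-2\binom{n-1}{(n-1)/2}$ and $(H\cup\{e_0,\overline{e_0}\})+B=\Z_2^n$. The first component $|H|+2=2n+2$ is then immediate.

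Mimicking the $n\equiv 0 \mod 4$ construction of Theorem \ref{thm:YLS0}, I would define $\delta(y)=1$ when $|y|\geq (n+1)/2$ and $\delta(y)=0$ otherwise. Since $n$ is odd, no $y$ has weight $n/2$, so the antisymmetry $\delta(y)+\delta(\overline{y})=1$ is automatic. Moreover $\delta(y)+\delta(y+e_i)=0$ fails only on pairs with $\{|y|,|y+e_i|\}=\{(n-1)/2,(n+1)/2\}$, i.e., on edges of the Hamming cube that cross the two middle layers. The problem then reduces to choosing $B$ that hits every such ``middle edge'' in at least one endpoint, is stable under $y\mapsto\overline{y}$, and excludes exactly $2\binom{n-1}{(n-1)/2}$ vertices.

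The key combinatorial observation is that stratifying the two middle layers by the single coordinate $y_1$ realises all three requirements. Set
\[
B=\Z_2^n\setminus E,\qquad E=\bigl\{y:y_1=0,\,|y|=(n-1)/2\bigr\}\cup\bigl\{y:y_1=1,\,|y|=(n+1)/2\bigr\}.
\]
Each piece of $E$ has $\binom{n-1}{(n-1)/2}$ elements and the two pieces are swapped by $y\mapsto\overline{y}$, so $|E|=2\binom{n-1}{(n-1)/2}$ and $B$ is $\overline{e_0}$-symmetric. For a middle edge $\{y,y+e_i\}$: when $i=1$ the two endpoints differ in their first bit, forcing the one with $y_1=0$ into $E$; when $i>1$ the first bit is common, and that common value of $y_1$ determines which endpoint (the one of weight $(n-1)/2$ or $(n+1)/2$, respectively) lies in $E$. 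Hence \eqref{eq:2delta11} holds and the preceding lemma produces the multiplicative pair.

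Finally I would verify the sumset condition: $e_0+B=B$ accounts for all non-excluded elements, and each $y\in E$ satisfies $y+e_i\in B$ for every $i>1$ (flipping a non-leading coordinate either pushes the weight out of the two middle-layer values or lands in a layer whose first-bit signature no longer matches the exclusion), so $y=e_i+(y+e_i)\in H+B$. Applying equation \eqref{eq:SqId} with the perturbed twisting function then delivers the admissible triple. The principal challenge in this plan is precisely the design of $E$: it must be $\overline{e_0}$-symmetric, have exact cardinality $2\binom{n-1}{(n-1)/2}$, and cover every $1$-bit edge between the two middle layers of the $n$-cube; the trick of splitting those two layers by the value of the single distinguished coordinate $y_1$ is what makes all three constraints hold simultaneously.
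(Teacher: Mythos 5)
Your construction is essentially the paper's: your excluded set $E$ is exactly the complement of the paper's $B=B_1\biguplus(\overline{e_0}+B_1)$, your threshold function $\delta$ agrees with the paper's perturbation on $B$ (the only place its values matter), and the reduction to equation \eqref{eq:2delta11} plus the antisymmetry $\delta(y)+\delta(\overline{y})=1$ is the same. The only blemish is the phrase ``choosing $B$ that hits every such middle edge in at least one endpoint'' --- it is $E$, not $B$, that must cover the middle edges --- but the case analysis that follows establishes precisely the correct statement, so this is a wording slip rather than a gap.
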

\begin{proof}
For any $y = (y_1, \dots, y_n) \in \Z_2^n,$
define $$\delta(y)=\left\{\begin{array}{ll}
                      1, & \textrm{ if } |y|\geq \frac{n+1}{2}+1;               \\
                      1, & \textrm{ if } |y|=\frac{n+1}{2} \textrm{ and } y_1=0;\\
                      0, & \textrm{ otherwise. }
                   \end{array}\right.$$
Let $$B_1=\{y\in \Z_2^n\mid |y|\geq\frac{n+1}{2}+1\}\biguplus \{y\in\Z_2^n\mid |y|=\frac{n+1}{2}\textrm{ and } y_1=0\}.$$
Then we have $\overline{e_0}+B_1=\{y\in \Z_2^n \mid |y|\leq \frac{n-1}{2}-1\}
                       \biguplus \{y\in \Z_2^n \mid |y|=\frac{n-1}{2}\textrm{ and } y_1=1\}$ and $\delta(B_1)=1$.
It is not hard to see that
\[\card B = 2^n-2\binom{n-1}{\dfrac{n-1}{2}} \textrm{ and } H\bigcup\{e_0, \overline{e_0}\} + B=\Z_2^n.\]
To prove the theorem, it is enough to verify that $(H\bigcup\{e_0,\overline{e_0}\},B)$ makes a multiplicative pair.
According to the previous arguments, it remains to verify equation \eqref{eq:2delta11} for all $y\in B_1$ by symmetry.
\par
We claim that for all $y \in B_1$ if $e_i+y\in B$ then $e_i+y\in B_1$.
With this, it is clear that equation \eqref{eq:2delta11} holds for all $y\in B_1$ and the theorem follows.
\par
The claim is confirmed in two cases according to the weight of $y.$
First, if $|y|\geq \frac{n+1}{2}+1$ then $|e_i+y|\geq \frac{n+1}{2}$.
Certainly, if $e_i+y\in B$ then $e_i+y\in B_1$.
Next, if $|y|=\frac{n+1}{2}$ and $y_1=0$, then $|e_i+y|$ is either $\frac{n+1}{2}+1$ or $\frac{n-1}{2}$.
If $|e_i+y| = \frac{n+1}{2}+1$, then $e_i+y\in B$ implies $e_i+y\in B_1$.
If $|e_i+y|$ were $\frac{n-1}{2}$, then $y_i=1.$
Therefore, if $e_i+y\in B$ then $e_i+y\in \overline{e_0}+B_1$ by the definition of $B.$
More precisely, $e_i+y \in \{x\in \Z_2^n \mid |x|=\frac{n-1}{2}\textrm{ and } x_1=1\},$ and thus $(e_i+y)_1=1.$
But this means that $i=1$ since $y_1=0$ by assumption, which contradicts with the condition $y_i=y_1=1$ imposed earlier on $y.$
\end{proof}
%%%%%%%%%%%%%%%%%%%%%%%%%%%%%%%%%
\subsection{The case in which $n\equiv 2\mod 4$}\label{subsec:Yuzvinsky2}
%%%%%%%%%%%%%%%%%%%%%%%%%%%%%%%%%
The idea is similar, only in this case more complicated $\Z_2^n$-graded quasialgebras
and more delicate perturbation functions get involved.
We shall consider the algebra $\bbP_n(n)$ and its H-set $H=\{e_i, e_i+e_n \mid 1\leq i\leq n\}.$
The verification that $H$ is indeed an H-set is fairly easy and thus omitted.
Note that $\r(2^n)=2n$ and we need to add two elements into the chosen H-set.
We intend to expand $H$ to $H\bigcup\{\overline{e_0}, \overline{e_n}\}$
according to the apparent symmetry of $H$ with respect to $e_n.$
Let $f=f_{\bbP(n)}$ and perturb it by $\delta_1$, $\delta_2$ and $\delta_3$ in the following way:
\begin{equation*}
f'(x,y)=\left\{\begin{array}{ll}
                f(x,y)+\delta_1(y), & \textrm{ if } x = \overline{e_0}; \\
                f(x,y)+\delta_2(y), & \textrm{ if } x = \overline{e_n}; \\
                f(x,y)+\delta_3(y), & \textrm{ if } x = e_0; \\
                f(x,y),             & \textrm{ otherwise. }
               \end{array}\right.
\end{equation*}
\par
Similarly, by routine calculations we have the following
\begin{lem}
$(H\bigcup\{\overline{e_0},\overline{e_n}\}, B)$ forms a multiplicative pair with respect to $f'$
if and only if for $\forall h\in H\setminus\{e_0\}$ the following equations hold:
\begin{align*}
& f'(\overline{e_0},y)+f'(\overline{e_0},y+\overline{h})=f(h,\overline{h})+1, \quad
  \textrm{ whenever } y, y+\overline{h}\in B; \\
& f'(\overline{e_n},y)+f'(\overline{e_n},y+\overline{h+e_n})=f(h,h+\overline{e_n})+1, \quad
  \textrm{ whenever } y, y+\overline{h+e_n}\in B; \\
& f'(e_0,y)+f'(e_0,y+h)=f(h,h)+1, \quad
  \textrm{ whenever } y, y+h\in B; \\
& f'(e_0,y)+f'(e_0,\overline{y})+f'(\overline{e_0},y)+f'(\overline{e_0},\overline{y})=1, \quad
  \textrm{ whenever } y, \overline{y}\in B; \\
& f'(e_0,y)+f'(e_0,y+\overline{e_n})+f'(\overline{e_n},y)+f'(\overline{e_n},y+\overline{e_n})=1, \quad
  \textrm{ whenever } y, y+\overline{e_n}\in B; \\
& f'(\overline{e_0},y)+f'(\overline{e_0},e_n+y)+f'(\overline{e_n},y)+f'(\overline{e_n},e_n+y)=1, \quad
  \textrm{ whenever } y, e_n+y\in B,
\end{align*}
or equivalently
\begin{align}
\label{eq:delta21}
& \delta_1(y)+\delta_1(y+\overline{h})=f(\overline{e_0},h)+f(h,\overline{e_0})+1, \quad
  \textrm{ whenever } y, y+\overline{h}\in B; \\
\label{eq:delta22}
& \delta_2(y)+\delta_2(y+\overline{h}+e_n)=f(\overline{e_n},h)+f(h,\overline{e_n})+1, \quad
  \textrm{ whenever } y, y+\overline{h+e_n}\in B; \\
\label{eq:delta23}
& \delta_3(y)+\delta_3(y+h)=f(h,h)+1, \quad
  \textrm{ whenever } y, y+h\in B; \\
\label{eq:delta24}
& \delta_3(y)+\delta_3(\overline{y})+\delta_1(y)+\delta_1(\overline{y})=0, \quad
  \textrm{ whenever } y, \overline{y}\in B; \\
\label{eq:delta25}
& \delta_3(y)+\delta_3(y+\overline{e_n})+\delta_2(y)+\delta_2(y+\overline{e_n})=0, \quad
  \textrm{ whenever } y, y+\overline{e_n}\in B; \\
\label{eq:delta26}
& \delta_1(y)+\delta_1(e_n+y)+\delta_2(y)+\delta_2(e_n+y)=0, \quad
  \textrm{ whenever } y, y+e_n\in B.
\end{align}
\end{lem}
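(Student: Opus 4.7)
The plan is to apply Proposition \ref{prop:multi} directly to the augmented set $H' = H \cup \{\overline{e_0}, \overline{e_n}\}$ paired with $B$, using the perturbed twisting function $f'$. The criterion requires that for every pair of distinct $x, t \in H'$ and every $y \in B$ with $y + x + t \in B$,
\[
f'(x, y) + f'(x, y + x + t) + f'(t, y) + f'(t, y + x + t) = 1.
\]
I would partition the unordered pairs $\{x, t\} \subset H'$ into three families: (A) both entries in $H \setminus \{e_0\}$; (B) exactly one entry in $\{e_0, \overline{e_0}, \overline{e_n}\}$; (C) both entries in $\{e_0, \overline{e_0}, \overline{e_n}\}$. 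Family (A) is automatic, since $f' \equiv f$ on $H \setminus \{e_0\}$ and $H$ is already an H-set in $\bbP_n(n)$, so the identity holds for every $y \in \Z_2^n$.

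The three sub-cases of (B) each pair an element $u \in \{e_0, \overline{e_0}, \overline{e_n}\}$ with some $h \in H \setminus \{e_0\}$. The decisive observation is that $f_{\bbP(n)}$ is linear in its second argument: this collapses $f(h, y) + f(h, y + x + t)$ into the constant $f(h, x + t)$, reducing the four-term condition of Proposition \ref{prop:multi} to the two-term form shown in the top three displayed equations of the lemma (with $x+t$ equal to $\overline{h}$, $\overline{h+e_n}$, or $h$ respectively). Substituting $f'(u, y) = f(u, y) + \delta_i(y)$ and invoking linearity once more on $f(u, \cdot)$ then yields the equivalent equations \eqref{eq:delta21}, \eqref{eq:delta22}, \eqref{eq:delta23}, provided one confirms the mod-2 identities
\[
f_{\bbP(n)}(h, h) \equiv f_{\bbP(n)}(\overline{e_0}, \overline{e_0}) \equiv f_{\bbP(n)}(\overline{e_n}, \overline{e_n}) \equiv 1 \pmod 2.
\]
The first of these is the H-set condition applied to the pair $(e_0, h)$; the other two follow from a direct mod-2 count of the contributions of $f_1, f_2, f_3, f_n$ at the diagonal inputs $(\overline{e_0}, \overline{e_0})$ and $(\overline{e_n}, \overline{e_n})$, both crucially using $n \equiv 2 \pmod 4$.

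For family (C), both elements of the pair carry a perturbation, so the four-term sum has four $f$-contributions and four $\delta$-contributions. Linearity collapses the $f$-part to a single constant, namely $f_{\bbP(n)}(\overline{e_0}, \overline{e_0})$, $f_{\bbP(n)}(\overline{e_n}, \overline{e_n})$, or $f_{\bbP(n)}(\overline{e_0}, e_n) + f_{\bbP(n)}(\overline{e_n}, e_n)$, according to the pair $\{e_0, \overline{e_0}\}$, $\{e_0, \overline{e_n}\}$, or $\{\overline{e_0}, \overline{e_n}\}$; each of these three quantities equals $1 \pmod 2$ under $n \equiv 2 \pmod 4$, the last again by direct mod-2 computation. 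This forces the right-hand sides of \eqref{eq:delta24}, \eqref{eq:delta25}, \eqref{eq:delta26} to be $0$ as stated.

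The main obstacle is not conceptual but the bookkeeping: six parallel reductions, each requiring two applications of linearity in the second argument followed by a mod-2 simplification of the residual $f$-combination. The substantive content is the collection of explicit congruences for $f_{\bbP(n)}$ listed above, all of which either follow from the H-set property of $H$ or are elementary mod-2 counts that rely on $n \equiv 2 \pmod 4$. Once those are in hand, the stated equivalence is immediate from Proposition \ref{prop:multi}.
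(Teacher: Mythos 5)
Your proposal is correct and is precisely the ``routine calculation'' the paper invokes without writing out: apply Proposition \ref{prop:multi} (in the linear-in-second-argument form) to all pairs in $H\bigcup\{\overline{e_0},\overline{e_n}\}$, use the H-set property of $H$ for the unperturbed pairs and to get $f(h,h)=1$, and verify the diagonal/mixed values $f(\overline{e_0},\overline{e_0})=f(\overline{e_n},\overline{e_n})=f(\overline{e_0},e_n)+f(\overline{e_n},e_n)=1$ by a mod-$2$ count using $n\equiv 2\bmod 4$. I checked these congruences and the resulting reductions to \eqref{eq:delta21}--\eqref{eq:delta26}; they all come out as you state.
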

\par
Again, if we set $\delta=\delta_1=\delta_2=\delta_3$,
then equations \eqref{eq:delta24}, \eqref{eq:delta25} and \eqref{eq:delta26} hold automatically.
With this reduction, one has
\begin{lem}
$(H\bigcup\{\overline{e_0},\overline{e_n}\},B)$ is a multiplicative pair
if and only if the following equations hold:
\begin{align}
\label{eq:7delta21}
& \delta(y)+\delta(y+\overline{e_i})=1,     \quad \forall i\neq n, \textrm{ whenever } y, y+\overline{e_i}\in B; \\
\label{eq:7delta22}
& \delta(y)+\delta(y+\overline{e_i}+e_n)=1, \quad \forall i\neq n, \textrm{ whenever } y, e_n+y+\overline{e_i}\in B; \\
\label{eq:7delta23}
& \delta(y)+\delta(y+e_i)=0,                \quad \forall i\neq n, \textrm{ whenever } y, y+e_i\in B; \\
\label{eq:7delta24}
& \delta(y)+\delta(y+e_i+e_n)=0,            \quad \forall i\neq n, \textrm{ whenever } y, e_n+y+e_i\in B; \\
\label{eq:7delta25}
& \delta(y)+\delta(y+\overline{e_n})=1,     \quad \textrm{ whenever } y, y+\overline{e_n}\in B; \\
\label{eq:7delta26}
& \delta(y)+\delta(y+\overline{e_n}+e_n)=1, \quad \textrm{ whenever } y, e_n+y+\overline{e_n}\in B; \\
\label{eq:7delta27}
& \delta(y)+\delta(y+e_n)=0,                \quad \textrm{ whenever } y, e_n+y\in B.
\end{align}
\end{lem}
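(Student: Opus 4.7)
The plan is to derive the statement directly from the immediately preceding lemma, which characterises multiplicativity of $(H\cup\{\overline{e_0},\overline{e_n}\},B)$ with respect to $f'$ in terms of the three perturbation functions $\delta_1,\delta_2,\delta_3$. Under the identification $\delta_1=\delta_2=\delta_3=\delta$, the three ``cross'' equations \eqref{eq:delta24}, \eqref{eq:delta25}, \eqref{eq:delta26} each collapse to $2\delta(y)+2\delta(y+u)=0$ in $\Z_2$ and become tautological (as already observed just before the lemma). Hence the multiplicativity condition is equivalent to \eqref{eq:delta21}, \eqref{eq:delta22}, \eqref{eq:delta23} read with a single $\delta$.

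The next step is to expand these three equations by letting $h$ run over $H\setminus\{e_0\}=\{e_i:1\le i\le n\}\cup\{e_i+e_n:1\le i\le n-1\}$. A short table records, for each of the nine resulting types of $h$, the translation $u\in\Z_2^n$ appearing on the left-hand side and the target equation in \eqref{eq:7delta21}--\eqref{eq:7delta27} (for instance $h=e_n$ in \eqref{eq:delta22} produces $u=\overline{e_n}+e_n$, matching \eqref{eq:7delta26}). These nine types cover the seven target equations precisely, with \eqref{eq:7delta21} and \eqref{eq:7delta22} each arising from two different types; consistency of those two versions is part of what must be checked.

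To match the right-hand sides, I would verify, for every $h\in H\setminus\{e_0\}$, the identities
\[ f(h,h)=1,\qquad f(\overline{e_0},h)+f(h,\overline{e_0})=0,\qquad f(\overline{e_n},h)+f(h,\overline{e_n})=0 \]
in $\Z_2$. Each is handled by expanding $f=f_1+f_2+f_3+f_n$: the $f_1,f_2,f_3$ contributions depend only on $|h|\in\{1,2\}$ and on small binomials $\binom{n-1}{k}$, while $f_n$ vanishes on every diagonal term in sight (since each relevant $h$ has at most two nonzero entries but $n-1\ge 3$) and in the cross terms produces a single term that parity-cancels exactly when $n\equiv 2\mod 4$.

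The main obstacle is the bookkeeping itself: roughly nine numerical identities in $\Z_2$ need to be verified, each involving a short sum of binomials whose parity is pinned down by the hypothesis $n\equiv 2\mod 4$ (specifically, by the evenness of $n$ and of $\binom{n-1}{2}$, and by the values of the generating function $\alpha_\bbO$ at the weights $n$ and $n-1$). No deeper conceptual ingredient is required.
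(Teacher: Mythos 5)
Your proposal is correct and follows essentially the same route as the paper: the paper's proof is precisely to set $\delta_1=\delta_2=\delta_3=\delta$ (so that \eqref{eq:delta24}--\eqref{eq:delta26} become vacuous), let $h$ run over $e_n$, $e_i$ and $e_i+e_n$ in \eqref{eq:delta21}--\eqref{eq:delta23}, and evaluate the right-hand sides by direct calculation with $f=f_1+f_2+f_3+f_n$ and $n\equiv 2\bmod 4$. Your bookkeeping of the nine cases onto the seven target equations, and the three identities $f(h,h)=1$, $f(\overline{e_0},h)+f(h,\overline{e_0})=0$, $f(\overline{e_n},h)+f(h,\overline{e_n})=0$, is exactly what those ``direct calculations'' amount to.
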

\begin{proof}
In equations \eqref{eq:delta21}-\eqref{eq:delta23} let $h$ take turns to be $e_n$, $e_i$ and $e_i+e_n$, $i \ne n$ and carry out direct calculations.
\end{proof}
\par
We proceed to decide $B$ in light of the preceding lemma and simultaneously by taking advantage of the obvious symmetry of $\Z_2^n$ as before.
Assume $B=B_0 \biguplus e_n+B_0$ in which $B_0$ is a subset of $\{y\in \Z_2^n \mid y_n=0\}$.
Assume also that for all $y\in \Z_2^n,$ the equation $\delta(y)+\delta(y+e_n)=0$ holds.
Together with the symmetry of $B,$ equations \eqref{eq:7delta21}-\eqref{eq:7delta27} are further reduced to:
\begin{align}
\label{eq:3delta21}
& \delta(y)+\delta(y+e_i)=0,
  \quad \forall i\neq n, \textrm{ whenever } y\in B_0, y+e_i\in B_0; \\
\label{eq:3delta22}
& \delta(y)+\delta(y+e_i+\overline{e_n})=1,
  \quad \forall i\neq n, \textrm{ whenever } y\in B_0, y+e_i+\overline{e_n}\in B_0; \\
\label{eq:3delta23}
& \delta(y)+\delta(y+\overline{e_n})=1,
  \quad                  \textrm{ whenever } y\in B_0, y+\overline{e_n}\in B_0.
\end{align}
\par
\begin{thm}
\label{thm:Yuz2}
There exists a series of admissible triples of sizes
$$\left[ 2n+2, 2^n-4\binom{n-2}{\dfrac{n-2}{2}}, 2^n\right],
  \quad\textrm{ for } n\geq 4 \textrm{ and } n\equiv 2 \mod 4. $$
\end{thm}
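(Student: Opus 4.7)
My plan is to exhibit an explicit pair $(B,\delta)$ meeting the three reduced equations \eqref{eq:3delta21}--\eqref{eq:3delta23}. Put $m(y):=\sum_{i=2}^{n-1} y_i$ (the inner weight), and define
\[
B_0:=\{y\in\Z_2^n: y_n=0,\ m(y)\ne\tfrac{n-2}{2}\},\qquad B:=B_0\biguplus(e_n+B_0),
\]
together with $\delta(y):=1$ if $m(y)\geq n/2$ and $\delta(y):=0$ otherwise. Since $(n-2)/2$ is an integer (as $n\equiv 2\mod 4$), summing over the four choices of $(y_1,y_n)\in\Z_2\times\Z_2$ gives $|B|=2^n-4\binom{n-2}{(n-2)/2}$, which is the required count. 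Because $\delta$ does not depend on $y_n$, the auxiliary hypothesis $\delta(y)+\delta(y+e_n)=0$ holds throughout, so the task reduces to verifying \eqref{eq:3delta21}--\eqref{eq:3delta23} on $B_0$.

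The verification goes case by case. For \eqref{eq:3delta21}, an $e_1$-move fixes $m$, while an $e_i$-move with $i\in\{2,\ldots,n-1\}$ shifts $m$ by $\pm 1$; the requirement that both $y$ and $y+e_i$ avoid the middle slice $m=(n-2)/2$ keeps them on the same side of the threshold, so $\delta$ agrees. For \eqref{eq:3delta23}, the identity $m(y+\overline{e_n})=(n-2)-m(y)$ reflects the inner weight through $(n-2)/2$, giving $\delta(y)+\delta(y+\overline{e_n})=1$. Equation \eqref{eq:3delta22} at $i=1$ reduces to \eqref{eq:3delta23}; for $i\in\{2,\ldots,n-1\}$, direct computation yields
\[
m(y+e_i+\overline{e_n})=(n-3)-m(y)+2y_i,
\]
and a split on $y_i\in\{0,1\}$ shows that the pairs at which $\delta$ would fail to flip are exactly those forcing either $m(y)$ or $m(y+e_i+\overline{e_n})$ to equal $(n-2)/2$, and so are excluded by membership in $B_0$. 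Finally, any $y\notin B$ has $m(y)=(n-2)/2$, so for any $j\in\{2,\ldots,n-1\}$ we have $y=e_j+(y+e_j)$ with $e_j\in H$ and $y+e_j\in B$, proving $(H\cup\{\overline{e_0},\overline{e_n}\})+B=\Z_2^n$. Formula \eqref{eq:SqId} then delivers the desired square identity.

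The main obstacle is \eqref{eq:3delta22} for $i\in\{2,\ldots,n-1\}$, where $\delta(y+e_i+\overline{e_n})$ depends on the individual bit $y_i$ in addition to $m(y)$; this would derail a more symmetric ansatz. What saves the argument is a tight arithmetic coincidence: the shift $2y_i$ is exactly large enough that, in each of the sub-cases $y_i=0$ and $y_i=1$, the value of $m(y)$ for which $\delta$ would need to repeat rather than flip is precisely the value forcing $m(y+e_i+\overline{e_n})=(n-2)/2$, so the offending pairs are automatically absent from $B_0$.
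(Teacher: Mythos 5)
Your construction is correct and, after unwinding, coincides exactly with the paper's: the excluded set $\{y: m(y)=(n-2)/2\}$ and your $\delta$ (the indicator of $m(y)\ge n/2$) are precisely the paper's $B$ and perturbation function built from $B_1\subset\Z_2^{n-1}$, so this is essentially the same proof. The only difference is presentational: you verify \eqref{eq:3delta21}--\eqref{eq:3delta23} directly in terms of the inner weight $m$, whereas the paper identifies $\{y\in\Z_2^n \mid y_n=0\}$ with $\Z_2^{n-1}$ and imports the solution already obtained in the $n\equiv 1 \bmod 4$ case.
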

\begin{proof}
We identify $\Z_2^{n-1}$ with $\{y\in \Z_2^n \mid y_n=0\}$
by sending vector $y\in\Z_2^{n-1}$ to vector $(y,0)\in\Z_2^n$.
Thus, $\overline{e_n}$ functions exactly as $\overline{e_0}$ in $\Z_2^{n-1}$.
From this point of view, equations \eqref{eq:3delta21}-\eqref{eq:3delta23}
are essentially identical to equations \eqref{eq:2delta11}-\eqref{eq:2delta13}.
Hence we can choose $\delta$ and $B$ in a similar manner. More precisely, let
\[B_0=B'_1 \biguplus \overline{e_n}+B'_1, \]
where $B'_1=\{(y,0) \mid y\in B_1\}$ and
$$B_1=\{y\in \Z_2^{n-1}\mid |y|\geq\frac{n}{2}+1\}\biguplus \{y\in\Z_2^{n-1}\mid |y|=\frac{n}{2}\textrm{ and } y_1=0\};$$
let $$\delta(y)=\left\{\begin{array}{ll}
                        1, & \textrm{ if } y\in B'_1\biguplus e_n+B'_1;\\
                        0, & \textrm{ otherwise. }
                       \end{array}\right.$$
Hence $B = B'_1\biguplus\{\overline{e_n}+B'_1\}\biguplus \{e_n+B'_1\}\biguplus\{\overline{e_0}+B'_1\},$
and by simple counting $$\card B=2^n-4\binom{n-2}{\dfrac{n-2}{2}}.$$
Now with the easy fact $H\bigcup\{\overline{e_0}, \overline{e_n}\} + B=\Z_2^n$ the theorem follows.
\end{proof}
%%%%%%%%%%%%%%%%%%%%%%%%%%%%%%%%%
%%%%%%%%%%%%%%%%%%%%%%%%%%%%%%%%%
\section{Improvement of the Lenzhen-Morier-Genoud-Ovsienko Formulas}\label{sec:LMGO}
%%%%%%%%%%%%%%%%%%%%%%%%%%%%%%%%%
%%%%%%%%%%%%%%%%%%%%%%%%%%%%%%%%%
In principle, given any admissible triple $[r,s,N],$ one can also construct new triples from it
by decreasing the second and the third entries while preserving the first.
Similar to the method of Yuzvinsky,
the construction relies on an explicit realization of the known triple by multiplicative pairs.
Morier-Genoud and Ovsienko constructed the Hurwitzian sets in their higher octonions $\bbO_n$
with the exception $n \equiv 0 \mod 4$ in \cite{MGO},
and this enabled them to apply this idea on the Hurwitz-Radon triple $[\r(2^n),2^n,2^n]$
to provide two infinite families of admissible triples in their joint work \cite{LMGO} with Lenzhen.
\par
Note that in our series of algebras $\bbP_n(m)$ the Hurwitzian sets (of cardinality $\r(2^n)$)
can be constructed for all $n.$ Hence we are able to increase the first entry of
the Lenzhen-Morier-Genoud-Ovsienko triples in the case of $n\equiv 0\mod 4.$
In addition, we observe that there is a flaw in their construction and by a proper remedy
we can strengthen their admissible triples in some cases.
\par
%%%%%%%%%%%%%%%%%%%%%%%%%%%%%%%%%
\subsection{The idea of constructing new triples}
%%%%%%%%%%%%%%%%%%%%%%%%%%%%%%%%%
To begin with, we give more details on the idea of constructing new triples from the known ones.
Suppose that $(A,B)$ is a multiplicative pair in a suitable $\Z_2^n$-graded quasialgebra.
Obviously, if $A_1\subseteq A$ and $B_1\subseteq B,$ then $(A_1,B_1)$ is a multiplicative pair as well and
it results in a new triple $[\card{}(A_1),\,\card{}(B_1),\,\card{}(A_1+B_1)]$ by Corollary \ref{cor:multieq}.
\par
There are two basic methods of constructing new triples in which the first entries are preserved.
One is called \emph{the method of addition}.
Let $\{B_i\}$ be the set of all subsets of $B$.
Then we get a series of new triples $[\card{}(A),\,\card{}(B_i),\,\card{}(A+B_i)]$.
The subtlety is that one may move a step forward.
Namely, by noticing that there might exist extra elements $y \in B\setminus B_i$ such that $A+\{y\}\subseteq A+B_i$
and if this is the case, we can expand $B_i$ by $\{y\}$ and get better triples.
Dually, the other is \emph{the method of subtraction}.
Let $C=A+B$ and let $\{C_i\}$ be the set of all subsets of $C$.
For each $C_i,$ one has the new triple
$[\card(A),\card(B \setminus (A+C_i)),\card(A+\{B\setminus (A+C_i)\})].$
The reason why the last entry is not $\card(C\setminus C_i)$ is that
the removal of $A+C_i$ from $B$ may lead to the removal of some extra elements in $C\setminus C_i.$
So one should use $\card(A+\{B\setminus (A+C_i)\})$
instead of $\card(C\setminus C_i)$ to get the more accurate formula.
\par
In the following we give a description of the condition about
whether the subtlety may happen in the multiplicative pair $(H, \Z_2^n).$
\begin{prop}
\label{prop:DualRelation}
Let $H$ be a Hurwitzian set and let $B'\subseteq\Z_2^n$.
Then $H+\{\Z_2^n\setminus (H+B')\}=\Z_2^n\setminus B'$ if and only if
for $\forall\ z \in \Z_2^n\setminus B', \ H+z\nsubseteq H+B'$,
\end{prop}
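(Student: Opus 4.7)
The plan is to unpack the set equality into two inclusions, observe that one of them is an unconditional tautology, and then re-express the other as the stated pointwise condition. The Hurwitzian hypothesis on $H$ is not essential for the equivalence itself; it only fixes the setting in which the subtraction method is being applied, namely that $(H,\Z_2^n)$ is a multiplicative pair from which one wishes to carve out~$B'$.

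First I would verify the inclusion $H+\bigl(\Z_2^n\setminus(H+B')\bigr)\subseteq\Z_2^n\setminus B'$ with no extra hypothesis. Take $z=h+w$ with $h\in H$ and $w\in\Z_2^n\setminus(H+B')$; if one assumed $z\in B'$, then (using $h+h=e_0$ in $\Z_2^n$) $w=h+z$ would lie in $H+B'$, contradicting the choice of $w$. Hence this inclusion is automatic and uses only that every element of $\Z_2^n$ is its own inverse.

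For the reverse inclusion, I would rewrite $z\in H+\bigl(\Z_2^n\setminus(H+B')\bigr)$ as the assertion that there exists $h\in H$ with $h+z\notin H+B'$. Since $h$ ranges over all of $H$, this is literally the negation of $H+z\subseteq H+B'$. Consequently, $\Z_2^n\setminus B'\subseteq H+\bigl(\Z_2^n\setminus(H+B')\bigr)$ is equivalent to: for every $z\in\Z_2^n\setminus B'$, $H+z\not\subseteq H+B'$. Combining this reverse inclusion with the automatic forward inclusion delivers the claimed equivalence.

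I do not expect any genuine obstacle: the whole argument is a short unfolding of definitions exploiting the involutivity of addition in $\Z_2^n$. The substantive content of the proposition lies in its interpretation rather than its proof; it detects exactly when the subtraction procedure applied to the multiplicative pair $(H,\Z_2^n)$ is \emph{efficient}, i.e.\ when removing from $\Z_2^n$ the middle-entry elements indexed by $B'$ makes the third entry of the resulting admissible triple drop by precisely $\card B'$, with no collateral loss among the remaining sums.
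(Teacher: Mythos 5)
Your proof is correct and follows essentially the same route as the paper's: both reduce the set equality to the reverse inclusion via the involutivity $h+h=e_0$ in $\Z_2^n$ and then read off the pointwise condition $H+z\nsubseteq H+B'$. The only difference is that you make explicit the automatic inclusion $H+\bigl(\Z_2^n\setminus(H+B')\bigr)\subseteq\Z_2^n\setminus B'$, which the paper leaves implicit behind its ``$\varsubsetneq$'' formulation.
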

\begin{proof}
Clearly, the relation $H+\{\Z_2^n\setminus (H+B')\} \varsubsetneq \Z_2^n\setminus B'$ holds
if and only if there exists an element $z \in \Z_2^n\setminus B'$,
such that $\forall\ y\in \Z_2^n\setminus (H+B')$,
$z \notin H+y$, or equivalently $y \notin H+z$.
Hence one has $H+z\subseteq H+B'$. This completes the proof.
\end{proof}
\begin{rem} \label{r5.2}
There is a dual version for the method of addition.
In addition, if the condition of the proposition holds then the third entry of the triple
$[\card{}(H),2^n-\card{}(H+B'),2^n-\card{}(B')]$ will not decrease by removing extra elements in $\Z_2^n \setminus B',$
and in the meantime, the second entry of the triple  $[\card{}(H),\card{}(B'),\card{}(H+B')]$
will not increase by adding extra elements to $B'.$
\end{rem}
\begin{ex}
From Subsection \ref{subsec:anotherhset},
one has $H=\{e_0, e_i, \overline{e_i} \mid 1\leq i\leq 4\}$ is an H-set in case $n=4$ and $H+\Z_2^4=\Z_2^4.$
Using the method of subtraction, to remove $B'=\{e_0,e_1\}$ from the second $\Z_2^4,$
one has to remove $H+B'$ from the first $\Z_2^4$.
However, in this situation one easily derives that $H+B'=\Z_2^4$.
Therefore, one has such an unusual formula:
\[ H+\{\Z_2^4\setminus (H+B')\} = H+\varnothing = \varnothing = \varsubsetneq \Z_2^4\setminus B'.\]
Dually, after using the method of addition to get $[\card{}(H),\card{}(B'),\card{}(\Z_2^4)]$
one needs to move forward by expanding $B'$ to $Z_2^4$
in order to get a better triple $[\card{}(H),\card{}(\Z_2^4),\card{}(\Z_2^4)].$
This example explains the subtlety in both the method of subtraction and the method of addition.
\end{ex}
%%%%%%%%%%%%%%%%%%%%%%%%%%%%%%%%%
\subsection{Strengthening the Lenzhen-Morier-Genoud-Ovsienko formulas}
%%%%%%%%%%%%%%%%%%%%%%%%%%%%%%%%%
Take $H$ to be the Hurwitzian set defined in \eqref{eq:anotherhset} and let $B$ be a subset of $\Z_2^n.$
Expand $B$ to the utmost $B'$ such that $H+B=H+B'$
and obtain triples $[\card{}(H),\card{}(B'),\card{}(H+B')]$ and $[\card{}(H),2^n-\card{}(H+B'),2^n-\card{}(B')]$
by Proposition \ref{prop:DualRelation}.
\par
For convenience, let $e_{ij}\triangleq e_i+e_j$ and $\overline{e_{ij}}\triangleq \overline{e_0}+e_{ij}$
and define $\Gamma(l,k)$ with $1\leq l< k\leq n$ as
$$\Gamma(l,k)=\{e_{ij}\in\Z_2^n \mid 1\leq i<j<k\}\bigcup \{e_{ik}\in\Z_2^n \mid 1\leq i\leq l\}.$$
\par
We will construct subsets $B_{lk}$ and $B'_{lk}$ via $\Gamma(l,k)$ and get the following triples:
\begin{thm}\label{thm:firstfamilies}
Let $n > 4$ and $1\leq l<k\leq n.$
\begin{enumerate}
%%%%%%%%%%%%%%%%%%%%%%%%%%%%%%%%%
\item If $n\geq 5$ and $n=1,2 \mod 4$ then there exist square identities of sizes $[r,s,N]$ with
%%%%%%%%%%%%%%%%%%%%%%%%%%%%%%%%%
\begin{equation}
\left\{\begin{array}{rcl}
r & = & \r(2^n),                                             \\
s & = & 2\left(\binom{k-1}{2}+l+1\right),                    \\
N & = & 2\left(\binom{k-1}{2}+l+1\right)n -4\binom{k}{3}-2kl,
\end{array}\right.
\end{equation}
while $k\leq n-2$; \\
if $k=n-1$ and $l\leq n-3$, there are square identities of sizes $[r,s+2l,N]$; \\
if $k=n-1$ and $l=n-2$, there are square identities of sizes $[r,s+2l+2,N]$;   \\
if $k=n$, there are square identities of sizes $[r,s+2(n-1-l),N]$.
%%%%%%%%%%%%%%%%%%%%%%%%%%%%%%%%%
\item If $n\ge 8$ and $n=0,3 \mod 4$ then there exist square identities of sizes $[r,s,N]$ with
%%%%%%%%%%%%%%%%%%%%%%%%%%%%%%%%%
\begin{equation}
\left\{\begin{array}{rcl}
r&=&\r(2^n),                                                \\
s&=&2\left(\binom{k-1}{2}+l+1\right),                       \\
N&=&2\left(\binom{k-1}{2}+l+1\right)(n+1)-4\binom{k}{3}-2kl,
\end{array}\right.
\end{equation}
while $k\leq n-1$; \\
if $k=n$ and $l\leq n-2$, there are square identities of sizes $[r,s+2l,N]$; \\
if $k=n$ and $l=n-1$, there are square identities of sizes $[r,s+2l+2,N]$.
\end{enumerate}
\end{thm}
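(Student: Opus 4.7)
The plan is to exhibit explicit companion sets $B_{lk}$ and $B'_{lk}$ of the Hurwitzian set $H$ from \eqref{eq:anotherhset} that realise the advertised triples through the method of addition described at the start of this section. Guided by the target cardinality $s=2\bigl(\binom{k-1}{2}+l+1\bigr)$ and by the symmetry of both $H$ and $\Gamma(l,k)$ under complementation $y\mapsto\overline{e_0}+y$, the natural candidate is
\[
B_{lk}=\{e_0,\overline{e_0}\}\cup\Gamma(l,k)\cup\overline{\Gamma(l,k)}.
\]
Since $(H,\Z_2^n)$ is a multiplicative pair by Theorem \ref{thm:anotherhset}, the pair $(H,B_{lk})$ is automatically multiplicative by restriction; hence $r=|H|=\rho(2^n)$, and $|B_{lk}|=s$ follows from $|\Gamma(l,k)|=\binom{k-1}{2}+l$ together with the disjointness of the three pieces for $1\le l<k\le n$ and $n\ge 5$.

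The core computation is $N=|H+B_{lk}|$. I would organise it as follows. Decompose $H+B_{lk}$ along elements of $H$ and observe that, in the cases $n\equiv 0,3\mod 4$, the complementation symmetry of $B_{lk}$ yields $\overline{e_i}+B_{lk}=e_i+B_{lk}$, so only $n+1$ genuinely distinct translates appear, accounting for the factor $n+1$ in part~(2); in the cases $n\equiv 1,2\mod 4$ a similar pairing between $e_i+B_{lk}$ and $(e_1+e_i)+B_{lk}$ cuts the naive count in half and produces the factor $n$ in part~(1). The correction $-4\binom{k}{3}$ arises from the triple coincidences $e_i+e_{jk}=e_j+e_{ik}=e_k+e_{ij}$ with $i<j<k$, while the correction $-2kl$ arises from collapses of the form $e_i+e_{ik}=e_k$, which merge weight-one outputs into vectors already counted inside $B_{lk}$.

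The four boundary subcases correspond to those $(l,k)$ for which $\Gamma(l,k)\cup\overline{\Gamma(l,k)}$ begins to interact nontrivially with the translates of $H$: certain extra elements $y\in\Z_2^n\setminus B_{lk}$ then satisfy $H+y\subseteq H+B_{lk}$, so by Remark \ref{r5.2} they can be absorbed into an enlarged set $B'_{lk}$ without changing $N$. The explicit increments $+2l$, $+2l+2$, and $+2(n-1-l)$ precisely count the elements so absorbed, depending on whether $k$ equals $n-2$, $n-1$, or $n$ (respectively $n-1$ or $n$ in part~(2)). Maximality of the enlargement is then verified by applying Proposition \ref{prop:DualRelation}: for every $z\in\Z_2^n\setminus B'_{lk}$ one exhibits some $h\in H$ with $h+z\notin H+B'_{lk}$.

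The main obstacle is purely combinatorial bookkeeping: correctly tracking the overlaps in $H+B_{lk}$ so that $N$ matches the stated formula, and, in each boundary case, identifying exactly which additional elements of $\Z_2^n\setminus B_{lk}$ can be absorbed into $B'_{lk}$. A weight-stratified case analysis, organised by the position of $k$ relative to $n$ and by the value of $l$ (distinguishing $l\le n-3$, $l=n-2$, and $l=n-1$), delivers both $N$ and the maximal $B'_{lk}$. Once these are in hand, the pair $(H,B'_{lk})$ is multiplicative and formula \eqref{eq:SqId} applied to it directly produces the claimed square identities.
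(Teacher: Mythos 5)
Your overall architecture --- restrict $(H,\Z_2^n)$ to $(H,B_{lk})$, count $\card(H+B_{lk})$ via the internal symmetry of $H$, then saturate $B_{lk}$ to $B'_{lk}$ and certify maximality with Proposition \ref{prop:DualRelation} --- is the paper's. The genuine gap is your choice of companion set in part (1). For $n\equiv 1,2 \mod 4$ the Hurwitzian set is $H=\{e_i,\ e_1+e_i \mid 1\le i\le n\}$, which is invariant under translation by $e_1$ but \emph{not} under complementation $y\mapsto \overline{e_0}+y$; the paper accordingly takes $B_{lk}=\{e_0,e_1\}\cup\Gamma(l,k)\cup\bigl(e_1+\Gamma(l,k)\bigr)$, so that $H+e_{ij}=H+(e_1+e_{ij})$ and only $\binom{k-1}{2}+l+1$ distinct translates of $H$ arise, which is exactly what produces the factor $n$ in $N$. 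With your $B_{lk}=\{e_0,\overline{e_0}\}\cup\Gamma(l,k)\cup\overline{\Gamma(l,k)}$ the translates $H+e_{ij}$ and $H+\overline{e_{ij}}$ do not coincide and are in fact essentially disjoint (one lives in low weight, the other in high weight), so $\card(H+B_{lk})$ comes out roughly twice the stated $N$. Your justification is also internally inconsistent on this point: the pairing $e_i+B_{lk}=(e_1+e_i)+B_{lk}$ that you invoke to halve the count requires $e_1+B_{lk}=B_{lk}$, which your set fails. The complementation-symmetric set is the right one only for part (2), where $\overline{e_0}$ plays the role of $e_1$.

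Two secondary points. First, the paper does not obtain $N$ by inclusion--exclusion with correction terms: it shows the translate $H+e_{ij}$ contributes exactly $2(n-j)$ new elements (resp.\ $2(n-k)$ for $e_{ik}$ with $i\le l$), giving $\card(H+B_{lk})=2n+2\sum_{i<j\le k-1}(n-j)+2l(n-k)$, which simplifies algebraically to the stated $N$. The term $-4\binom{k}{3}$ is an artifact of $\sum_{j=2}^{k-1}j(j-1)=2\binom{k}{3}$ rather than a literal count of triple coincidences: a weight-three element $e_a+e_b+e_c$ is triply covered only when all of $e_{ab},e_{ac},e_{bc}$ lie in $\Gamma(l,k)$, and there are $\binom{k-1}{3}+\binom{l}{2}$ such triples, not $\binom{k}{3}$; moreover $e_i+e_{ik}=e_k$ lands in $H$, not in $B_{lk}$. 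Second, the boundary increments $+2l$, $+2l+2$ and $+2(n-1-l)$ come only from the explicit, case-by-case enumeration of which $e_{i_1i_2}\notin\Gamma(l,k)$ (together with their $e_1$- or $\overline{e_0}$-partners) satisfy $H+e_{i_1i_2}\subseteq H+B_{lk}$, organized by the position of $k$ relative to $n$ and by $l$; you correctly identify this as the remaining task but leave the entire analysis, which is the substance of the proof, unexecuted.
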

\begin{proof}
In this proof, we always assume $i<j$ in the notation of $e_{ij}$.
\begin{enumerate}
\item {\em Cases in which $n \equiv 1,2 \mod 4$}. Recall that $H=\{e_m,e_1+e_m \mid 1\leq m\leq n\}$ is an H-set.
Let $B_{lk} = \{e_0,e_1, e_{ij}, e_1+e_{ij} \mid e_{ij}\in\Gamma(l,k)\}$
whose cardinality is $$\card(B_{lk})=2+2\binom{k-1}{2}+2l.$$
The summation of $H$ and any element $e_{ij}$ of $\Gamma(l,k)$ is
$$H+e_{ij}=\{e_i,e_j,e_{ij}+e_m,e_1+e_i,e_1+e_j,e_1+e_{ij}+e_m \mid i\neq m\neq j\},$$
which is identical to $H+\{e_1+e_{ij}\}$ as $H$ is symmetric with respect to $e_1.$
Hence the corresponding sumset is then of the form
$$H+B_{lk}=H\cup \bigcup_{e_{ij}\in\Gamma(l,k)}\{H+e_{ij}\}.$$
Therefore its cardinality is
\begin{align*}
\card(H+B_{lk}) & = 2n+2\left(\sum_{1\leq i<j \leq k-1}(n-j)+l(n-k)\right)\\
                & = 2\left(\binom{k-1}{2}+l+1\right)n-4\binom{k}{3}-2kl.
\end{align*}
\par
Further, we expand $B_{lk}$ to its utmost $B'_{lk}$
by adding elements in $\Z_2^n\setminus B_{lk}$ by enumeration.
Suppose $e_x$ is an element not in $B_{lk}$.
\begin{enumerate}
%%%%%%%%%%%%%%%%%%%%%%%%%%%%%%%%%
\item If $|e_x|\geq 5$, then $e_x+e_0$ is not in $H+B_{lk}.$
Hence no elements of this form can be added into $B'_{lk}.$
%%%%%%%%%%%%%%%%%%%%%%%%%%%%%%%%%
\item If $e_x=e_{i_1}+e_{i_2}+e_{i_3}+e_{i_4}$ with $i_1<i_2<i_3<i_4,$ then there exists an element $e_{i_5}\in H$
where $i_5\notin \{i_1,i_2,i_3,i_4\}$, such that $e_x+e_{i_5} \notin H+B_{lk}.$
Therefore $B'_{lk}$ contains no elements of this form.
%%%%%%%%%%%%%%%%%%%%%%%%%%%%%%%%%
\item If $e_x=e_{i_1}+e_{i_2}+e_{i_3}$ with $1<i_1<i_2<i_3$ then there exists an element $e_{i_4}\in H$
where $i_4\notin \{1,i_1,i_2,i_3\}$, such that $e_x+e_{i_4} \notin H+B_{lk}$.
Hence $e_x$ can not be added into $B'_{lk}.$
If $e_x=e_1+e_{i_1 i_2}$ where $e_{i_1 i_2}\notin\Gamma(l,k)$ and $1<i_1<i_2$,
then we have $e_x+H=e_{i_1 i_2}+H$.
The next case is similar to this, so we discuss them together in the following.
%%%%%%%%%%%%%%%%%%%%%%%%%%%%%%%%%
\item If $e_x=e_{i_1 i_2}$ with $e_{i_1 i_2}\notin\Gamma(l,k)$,
then whether $e_x$ satisfies the condition of Proposition \ref{prop:DualRelation}
depends on the parameters $l$ and $k$.
And as mentioned above, we only need to check whether $e_{i_1 i_2}+e_m \in H+B_{lk}$.
Since $e_{i_1 i_2}+e_m$ can only appear in $H+\{e_{i_1 m}\}$ or $H+\{e_{i_2 m}\}$,
it suffices to check whether $e_{i_1 m}$ or $e_{i_2 m}$ belongs to $\Gamma(l,k)$ for all $m\neq i_1, i_2$.
If so, then $e_x$ and $e_1+e_x$ can be added into $B'_{lk}$.
In the following we separate the discussions into several cases:
\par
\begin{enumerate}
\item $k\leq n-2$.
\begin{enumerate}
\item If $i_1<i_2\leq n-1$, then $e_{i_1 m},\ e_{i_2 m}\notin \Gamma(l,k)$ when $m=n.$
\item If $i_1<n-1$ and $i_2=n$, then $e_{i_1 m},\ e_{i_2 m}\notin \Gamma(l,k)$ when $m=n-1.$
\item If $i_1=n-1$ and $i_2=n$, then $e_{i_1 m},\ e_{i_2 m}\notin \Gamma(l,k)$ when $m=n-2.$
\end{enumerate}
Hence in this case, no elements can be added into $B'_{lk}.$
\item $k=n-1$ and $l\leq n-3$.
\begin{enumerate}
%%%%%%%%%%%%%%%%%%%%%%%%%%%%%%%%%
\item If $i_1\leq l$ and $i_2=n$,
then $e_{i_1 m} \in\Gamma(l,k)$ holds for all $m\neq i_1, i_2,$
so these $2l$ elements of the form $e_{i_1 i_2}$ and $e_1+e_{i_1 i_2}$ can be added into $B'_{lk}.$
%%%%%%%%%%%%%%%%%%%%%%%%%%%%%%%%%
\item If $l<i_1<n-1$ and $i_2=n$, then $e_{i_1 m},\ e_{i_2 m}\notin \Gamma(l,k)$ when $m=n-1.$
%%%%%%%%%%%%%%%%%%%%%%%%%%%%%%%%%
\item If $i_1=n-1$ and $i_2=n$, then $e_{i_1 m},\ e_{i_2 m}\notin \Gamma(l,k)$ when $m=n-2.$
%%%%%%%%%%%%%%%%%%%%%%%%%%%%%%%%%
\item If $l<i_1<i_2=n-1$, then $e_{i_1 m},\ e_{i_2 m}\notin \Gamma(l,k)$ when $m=n.$
\end{enumerate}
%%%%%%%%%%%%%%%%%%%%%%%%%%%%%%%%%
\item $k=n-1$ and $l=n-2$.
\begin{enumerate}
%%%%%%%%%%%%%%%%%%%%%%%%%%%%%%%%%
\item If $i_1\leq l$ and $i_2=n$,
then $e_{i_1 m} \in\Gamma(l,k)$ holds for all $m\neq i_1, i_2$,
so these $2l$ elements of the form $e_{i_1 i_2}$ and $e_1+e_{i_1 i_2}$ can be added.
%%%%%%%%%%%%%%%%%%%%%%%%%%%%%%%%%
\item If $i_1=n-1$ and $i_2=n$, then $e_{i_1 m}$ is always in $\Gamma(l,k)$ for all $m\neq i_1, i_2$,
so these $2$ elements $e_{i_1 i_2}$ and $e_1+e_{i_1 i_2}$ can be added.
\end{enumerate}
%%%%%%%%%%%%%%%%%%%%%%%%%%%%%%%%%
\item $k=n$. \\
If $l<i_1<i_2=n$,
then $e_{i_1 m} \in\Gamma(l,k)$ holds for all $m\neq i_1, i_2$,
so these $2(n-1-l)$ elements of the form $e_{i_1 i_2}$ and $e_1+e_{i_1 i_2}$ can be added.
\end{enumerate}
%%%%%%%%%%%%%%%%%%%%%%%%%%%%%%%%%
\item If $e_x=e_{i_1}\notin B_{lk},$ then $H+e_{i_1}$ is exactly $H+e_{1 i_1}.$
This has already been discussed in the above case.
\end{enumerate}
%%%%%%%%%%%%%%%%%%%%%%%%%%%%%%%%%
\item The proof for the cases in which $n \equiv 0,3 \mod 4$ is similar to the above cases
except that $\overline{e_0}$ plays the role of $e_1$.
\end{enumerate}
\end{proof}
\par
From the proof of the above theorem, one can easily conclude that
$B'_{lk}$ is the utmost subset that can be expanded from $B_{lk}$.
Hence by Proposition \ref{prop:DualRelation} and Remark \ref{r5.2}, one has:
\begin{cor} Let $[r,s',N]$ be the admissible triples obtained in Theorem \ref{thm:firstfamilies}.
Then there exists a series of admissible triples of the form $[r,2^n-N,2^n-s'].$
\end{cor}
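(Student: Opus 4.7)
The plan is to derive the new triples by applying the \emph{method of subtraction} to the Hurwitz--Radon multiplicative pair $(H, \Z_2^n)$, where $H$ is the Hurwitzian set of Subsection \ref{subsec:anotherhset}. Given the triple $[r,s',N]=[\rho(2^n),\,\card{}(B'_{lk}),\,\card{}(H+B'_{lk})]$ realized by the multiplicative pair $(H,B'_{lk})$ of Theorem \ref{thm:firstfamilies}, I would set
\[
B''=\Z_2^n\setminus (H+B'_{lk}),
\]
so that $\card{}(B'')=2^n-N$. Since $B''\subseteq\Z_2^n$ and $(H,\Z_2^n)$ is multiplicative, the restriction $(H,B'')$ is automatically multiplicative and therefore yields an admissible triple of the form $[r,\,2^n-N,\,\card{}(H+B'')]$.

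The remaining task is to verify that $\card{}(H+B'')=2^n-s'$, or equivalently that $H+B''=\Z_2^n\setminus B'_{lk}$. By Proposition \ref{prop:DualRelation} applied to $B'_{lk}$, this identity is equivalent to the assertion that $H+z\not\subseteq H+B'_{lk}$ for every $z\in\Z_2^n\setminus B'_{lk}$. But $B'_{lk}$ was defined in the proof of Theorem \ref{thm:firstfamilies} as precisely the utmost enlargement of $B_{lk}$ preserving the sumset $H+B_{lk}$; hence any $z$ outside $B'_{lk}$ must have failed the test $H+z\subseteq H+B_{lk}=H+B'_{lk}$ during the enumeration, and the case analysis in that proof already exhibits, for each such $z$, a concrete element of $H+z$ missing from $H+B_{lk}$. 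These are exactly the witnesses required to invoke Proposition \ref{prop:DualRelation}.

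The main obstacle is organizational rather than conceptual: one must confirm that the enumeration of excluded vectors in the proof of Theorem \ref{thm:firstfamilies} was exhaustive, so that no element of $\Z_2^n\setminus B'_{lk}$ slips through without a witness. Granting this, Proposition \ref{prop:DualRelation} delivers the desired equality $H+B''=\Z_2^n\setminus B'_{lk}$, and Remark \ref{r5.2} ensures that the third entry of the resulting triple does not decrease by any accidental removal and the second entry does not increase by any accidental addition. Combining these observations produces the admissible triples $[r,\,2^n-N,\,2^n-s']$ claimed in the corollary.
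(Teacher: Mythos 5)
Your proposal is correct and follows essentially the same route as the paper: the authors likewise observe that $B'_{lk}$ is the utmost expansion of $B_{lk}$ (so that $H+z\nsubseteq H+B'_{lk}$ for every $z\notin B'_{lk}$, as the case analysis in Theorem \ref{thm:firstfamilies} witnesses), and then invoke Proposition \ref{prop:DualRelation} together with Remark \ref{r5.2} to obtain the dual triples $[r,2^n-N,2^n-s']$. Your write-up simply makes explicit the intermediate set $B''=\Z_2^n\setminus(H+B'_{lk})$ and the reduction to the witness condition, which the paper leaves implicit.
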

\begin{ex}
Theorem \ref{thm:firstfamilies} generates several best known admissible triples, see \cite{S}.
\begin{equation*}
\setlength{\extrarowheight}{2pt}
\begin{array}{c|c|c|c}  \hline
n & (l,k) & \textrm{triple} & \textrm{method}         \\ \hline  
5 & (2,4) & [10,16,28]      & \textrm{addition}       \\ \hline  
5 & (3,4) & [10,22,30]      & \textrm{addition}       \\ \hline  
6 & (1,2) & [12,44,60]      & \textrm{subtraction}    \\ \hline  
6 & (1,3) & [12,38,58]      & \textrm{subtraction}    \\ \hline  
6 & (4,5) & [12,32,52]      & \textrm{addition}       \\ \hline  
\end{array}
\end{equation*}
\end{ex}
\begin{rem}
The admissible triples obtained here strengthen the Lenzhen-Morier-Genoud-Ovsienko formulas in the following aspects:
Firstly, since our twisting function is a bit more complex than theirs,
we are able to avoid the exception of finding Hurwitzian sets when $n\equiv 0\mod 4$
and optimise the corresponding triples;
Secondly, by observing the subtlety of the method of subtraction in constructing new triples,
the Lenzhen-Morier-Genoud-Ovsienko formulas can be improved in several cases,
especially when $k$ is close to $n$ the third entries of their triples can be decreased;
Thirdly, using the same method of constructing the second families of 
the Lenzhen-Morier-Genoud-Ovsienko triples \cite[Theorem 2]{LMGO},
we can also provide similar families of triples for $n\equiv 0, 2\mod 4$. 
\end{rem}
\par
Furthermore, by observing the symmetry of the Hurwitzian set in the case $n\ge 8$ and $n=0\mod 4,$
we are able to expand the corresponding multiplicative pairs and improve the triples:
\begin{prop}
If $n\ge 8$ and $n=0 \mod 4,$ then there exist square identities of sizes $[r,s,N]$ with
\begin{equation}
\left\{\begin{array}{rcl}
r&=&\r(2^n)+1,                                                \\
s&=&2\left(\binom{k-1}{2}+l+1\right),                       \\
N&=&2\left(\binom{k-1}{2}+l+1\right)(n+1)-4\binom{k}{3}-2kl,
\end{array}\right.
\end{equation}
while $k\leq n-1$; \\
if $k=n$ and $l\leq n-2$, there are square identities of sizes $[r,s+2l,N]$; \\
if $k=n$ and $l=n-1$, there are square identities of sizes $[r,s+2l+2,N]$.
\end{prop}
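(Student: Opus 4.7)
The plan is to start from the multiplicative pair $(H,B'_{lk})$ in $\bbP_n(4)$ constructed in Theorem~\ref{thm:firstfamilies}(2) for $n\equiv 0\mod 4$, where $H=\{e_0,e_i,\overline{e_i}\mid 1\le i\le n\}$ is the Hurwitzian set from \eqref{eq:anotherhset}, and to enlarge the first factor by adjoining $\overline{e_0}$ with the aid of a perturbation of $f_{\bbP(4)}$. Since the sumset will turn out to be unchanged, this produces the triple $[\rho(2^n)+1,s,N]$ claimed in the proposition.

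Following the device of Subsection~\ref{subsec:Yuzvinsky0}, I would set $f'(\overline{e_0},y)=f_{\bbP(4)}(\overline{e_0},y)+\delta(y)$ and leave $f'|_{H\times\Z_2^n}=f_{\bbP(4)}$. Applying Corollary~\ref{cor:multieq} to the new pairs $(\overline{e_0},h)$ with $h\in H$ and invoking the linearity of $f_{\bbP(4)}$ in its second variable (Corollary~\ref{cor:linear}), the multiplicativity requirements reduce to exactly the three equations of Lemma~\ref{lem:3delta0},
\[\delta(y)+\delta(\overline{e_0}+y)=1,\ \ \delta(y)+\delta(\overline{e_i}+y)=1,\ \ \delta(y)+\delta(e_i+y)=0,\]
each imposed only when both arguments lie in $B'_{lk}$. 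The constants on the right are fixed by direct computation of the test values $c_{\bbP(4)}(\overline{e_0},e_0)=0$, $c_{\bbP(4)}(\overline{e_0},e_i)=0$ and $c_{\bbP(4)}(\overline{e_0},\overline{e_i})=1$, which follow from the table for $\a_\bbO$ together with the elementary binomial parity $\binom{n-1}{3}\equiv 1\pmod 2$ valid for $n\equiv 0\mod 4$.

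The crucial observation is that every element of $B'_{lk}$, both the defining set $B_{lk}=\{e_0,\overline{e_0},e_{ij},\overline{e_{ij}}\mid e_{ij}\in\Gamma(l,k)\}$ and the pairs $e_{ij},\overline{e_{ij}}$ adjoined in the case $k=n$, has weight in $\{0,2,n-2,n\}$. Under the hypothesis $n\ge 8$ one has $n/2\ge 4$, so this weight set avoids $n/2$ and is disjoint from $\{1,3,n-3,n-1\}$. With the threshold perturbation $\delta(y)=1$ if $|y|>n/2$ and $\delta(y)=0$ otherwise, the first equation holds because $|y|$ and $|\overline{e_0}+y|=n-|y|$ lie on opposite sides of $n/2$, while the second and third equations are vacuous: for any $y\in B'_{lk}$ the translates $\overline{e_i}+y$ and $e_i+y$ have weight in $\{1,3,n-3,n-1\}$ and so cannot themselves lie in $B'_{lk}$.

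It remains to check that the sumset is preserved. Both $B_{lk}$ and $B'_{lk}$ are closed under the involution $y\mapsto\overline{e_0}+y$, since every extra element is adjoined together with its $\overline{e_0}$-complement. Hence $\overline{e_0}+B'_{lk}=B'_{lk}\subseteq e_0+B'_{lk}\subseteq H+B'_{lk}$, giving $(H\cup\{\overline{e_0}\})+B'_{lk}=H+B'_{lk}$ and leaving $N$ unchanged. The main subtlety is to guarantee the weight separation that makes the first equation succeed and the last two vacuous, and this is exactly what the bound $n\ge 8$ affords; for $n=4$ the weight $n/2=2$ already appears in $B'_{lk}$ and the threshold perturbation would fail the first equation.
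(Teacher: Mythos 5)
Your proof is correct and is essentially the paper's own argument run in the opposite order: the paper first forms the perturbed pair $(H\bigcup\{\overline{e_0}\},B)$ of Theorem \ref{thm:YLS0} and then restricts the companion to $B'_{lk}\subseteq B$ (this containment being exactly your observation that every weight occurring in $B'_{lk}$ lies in $\{0,2,n-2,n\}$ and hence avoids $n/2$ once $n\geq 8$), whereas you restrict first and then adjoin $\overline{e_0}$, re-verifying the three $\delta$-equations directly on $B'_{lk}$ (two of which become vacuous by parity). Since both routes rest on the same perturbation $\delta$, the same weight condition, and the same sumset computation, the mathematical content is identical.
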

\begin{proof}
In fact, notice that in this case we expand the Hurwitzian set $H$ by $\overline{e_0}$ to $H',$
thus obtain a more symmetric multiplicative pair $(H',B)$,
which yields the first family of the Yuzvinsky-Lam-Smith formulas.
By applying the method of addition on this pair and expanding the subset
$B_{lk} = \{e_0,\overline{e_0}, e_{ij}, \overline{e_0}+e_{ij} \mid e_{ij}\in\Gamma(l,k)\}$ to $B'_{lk}$
in the same manner, one obtains the desired result.
The reason why this procedure works is that
$B'_{lk}$ contains elements whose weights are either $\le 2$ or $\ge n-2$, i.e.,
$B'_{lk}$ does not intersect with $\Z_2^n\setminus B.$
\end{proof}
%%%%%%%%%%%%%%%%%%%%%%%%%%%%%%%%%
\subsection{The cases in which $n=4$ and $n=7$}
%%%%%%%%%%%%%%%%%%%%%%%%%%%%%%%%%
Notice that in Theorem \ref{thm:firstfamilies}, we exclude the cases in which $n=4$ and $7.$
The reason is that the general argument therein breaks down on these two specific cases.
Nevertheless, with care we can find suitable multiplicative pairs and realize the associated triples.
For the sake of completeness, we list in below the results while omitting further details:
\begin{enumerate}
%%%%%%%%%%%%%%%%%%%%%%%%%%%%%%%%%
\item If $n=4$ then we have the admissible triples $[9,s,8+s]$ for $s=1,2,\cdots,7.$
These are very well-known triples, see \cite{S}.
%%%%%%%%%%%%%%%%%%%%%%%%%%%%%%%%%
\item If $n=7$ then the formulas in Theorem \ref{thm:firstfamilies} are still correct for $k\leq 5$ and for $l=1, \ k=6.$
As for the others, we list them in the following table:
\begin{equation*}
\setlength{\extrarowheight}{2pt}
\begin{array}{c|c|c||c|c|c}  \hline
(l,k) & \textrm{addition} & \textrm{subtraction} & (l,k) & \textrm{addition} & \textrm{subtraction} \\ \hline
(2,6) & [16,30,104]       & [16,24,98]           & (3,6) & [16,40,108]       & [16,20,88]           \\ \hline
(4,6) & [16,54,112]       & [16,16,74]           & (5,6) & [16,72,116]       & [16,12,56]           \\ \hline
(1,7) & [16,76,118]       & [16,10,52]           & (2,7) & [16,82,120]       & [16,8,46]            \\ \hline
(3,7) & [16,90,122]       & [16,6,38]            & (4,7) & [16,100,124]      & [16,4,28]            \\ \hline
(5,7) & [16,112,126]      & [16,2,16]            & (6,7) & [16,128,128]      & [16,0,0]             \\ \hline
\end{array}
\end{equation*}
\end{enumerate}
%%%%%%%%%%%%%%%%%%%%%%%%%%%%%%%%%
%%%%%%%%%%%%%%%%%%%%%%%%%%%%%%%%%
\section{Some New Families of Admissible Triples}\label{sec:newSeries}
%%%%%%%%%%%%%%%%%%%%%%%%%%%%%%%%%
%%%%%%%%%%%%%%%%%%%%%%%%%%%%%%%%%
In this section, we apply the method of addition on the Yuzvinsky formulas
and obtain some new families of admissible triples: 
\begin{thm}
\label{thm:newSeries}
Let $n\ge 4.$
\begin{enumerate}
\item If $n\equiv 0\mod 4$, then there exists a series of admissible triples of sizes:
\[\left[ 2n+2, 2\sum_{i=0}^{k}\binom{n}{i}, 2\sum_{i=0}^{k+1}\binom{n}{i}\right],\quad k=0,\cdots,\frac{n}{2}-2.\]
%%%%%%%%%%%%%%%%%%%%%%%%%%%%%%%%%
\item If $n\equiv 1\mod 4$, then there exists a series of admissible triples of sizes:
\[\left[ 2n+2, 2\sum_{i=0}^{k}\binom{n}{i}, 2\sum_{i=0}^{k+1}\binom{n}{i}\right],\quad k=0,\cdots,\frac{n-1}{2}-2.\]
%%%%%%%%%%%%%%%%%%%%%%%%%%%%%%%%%
\item If $n\equiv 2\mod 4$, then there exists a series of admissible triples of sizes:
\[\left[ 2n+2, 4\sum_{i=0}^{k}\binom{n-1}{i}, 4\sum_{i=0}^{k+1}\binom{n-1}{i}\right],\quad k=0,\cdots,\frac{n-2}{2}-2.\]
\end{enumerate}
\end{thm}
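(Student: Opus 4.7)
The plan is to apply the method of addition from Section \ref{sec:LMGO} to the three Yuzvinsky multiplicative pairs $(A, B^{(n)})$ constructed in Section \ref{sec:Yuzvinsky}. In each residue class of $n$ modulo $4$ I will choose a weight-symmetric subset $B_k \subseteq B^{(n)}$; since the multiplicativity condition is hereditary under taking subsets of the second argument, the pair $(A, B_k)$ is automatically multiplicative, and the admissible triple $[|A|, |B_k|, |A + B_k|]$ follows once the two cardinalities are computed.

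For Cases 1 and 2, I would set
\[ B_k = \{y \in \Z_2^n : |y| \leq k \text{ or } |y| \geq n-k\}. \]
The inclusion $B_k \subseteq B^{(n)}$ is straightforward in the stated range: in Case 1, the set $B^{(n)} = \{y : |y| \neq n/2\}$ of Theorem \ref{thm:YLS0} contains every $y$ with $|y| \leq k \leq n/2 - 2$; in Case 2, the companion set $B^{(n)}$ from Subsection \ref{subsec:Yuzvinsky1} contains all $y$ with $|y| \leq (n-3)/2$, and the hypothesis $k \leq (n-1)/2-2$ is stronger. The crucial observation is that $B_k$ is invariant under complementation $y \mapsto \overline{y}$, so both $e_0$ and $\overline{e_0}$ preserve $B_k$, and since $\overline{e_i} = \overline{e_0} + e_i$, the translate $\overline{e_i} + B_k$ coincides with $e_i + B_k$. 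Hence $A + B_k$ is obtained from $B_k$ by flipping at most one coordinate, giving $A + B_k = \{y : |y| \leq k+1 \text{ or } |y| \geq n-k-1\}$, which has the required cardinality $2\sum_{i=0}^{k+1}\binom{n}{i}$ while $|B_k| = 2\sum_{i=0}^k\binom{n}{i}$.

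Case 3 requires the two-factor decomposition $\Z_2^n = \Z_2^{n-1} \times \Z_2$ used in the proof of Theorem \ref{thm:Yuz2}. I would take $B_k = C_k \times \{0, 1\}$ with
\[ C_k = \{y' \in \Z_2^{n-1} : |y'| \leq k \text{ or } |y'| \geq n-1-k\}. \]
Since $B^{(n)}$ is essentially a doubled copy of the Case 2 construction in $\Z_2^{n-1}$, the same weight argument shows $B_k \subseteq B^{(n)}$ for $k \leq (n-2)/2 - 2$. The elements $\overline{e_0}$, $\overline{e_n}$, and $e_n$ all preserve $B_k$ (they complement both factors consistently, flip only the first factor, and flip only the second factor respectively), while each $e_i$ and $e_i + e_n$ with $i \neq n$ shifts $|y'|$ by $\pm 1$. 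Therefore $A + B_k = C_{k+1} \times \{0, 1\}$, which yields $|B_k| = 4\sum_{i=0}^k \binom{n-1}{i}$ and $|A + B_k| = 4\sum_{i=0}^{k+1}\binom{n-1}{i}$ in the stated range.

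The main obstacle is verifying $B_k \subseteq B^{(n)}$ precisely, since the Yuzvinsky companion sets are defined implicitly through perturbation functions and contain certain partial weight classes near the middle (for instance only those elements of weight $(n-1)/2$ with $y_1 = 1$ in Case 2). The stated ranges of $k$ are chosen so that $B_k$ stays clear of those middle classes, so the inclusion reduces to routine weight counting. The subsequent sumset computations then rely only on the complementation and single-coordinate-flip symmetries of $B_k$, and the claimed cardinalities follow by elementary binomial enumeration.
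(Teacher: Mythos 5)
Your proposal is correct and follows essentially the same route as the paper: it applies the method of addition to the three Yuzvinsky multiplicative pairs with the same weight-ball subsets $B_k$ (respectively $C_k\times\{0,1\}$ in the $n\equiv 2$ case), uses the complementation/coordinate-flip symmetry to identify $H'+B_k$ with $B_{k+1}$, and counts cardinalities by binomial sums. The only item the paper adds beyond your argument is the observation that $B_k$ cannot be further expanded, which is not needed for the admissibility claim itself.
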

\begin{proof}
Denote by $(H',B)$ the multiplicative pairs corresponding to the Yuzvinsky admissible triples in each case.
We prove the theorem case by case according to the residue of $n$ modulo $4.$
\begin{enumerate}
%%%%%%%%%%%%%%%%%%%%%%%%%%%%%%%%%%%%%%%%%%%%%%%%%%%%%%%%%%%%%%%%%%%
\item $n\equiv 0\mod 4$.
%%%%%%%%%%%%%%%%%%%%%%%%%%%%%%%%%%%%%%%%%%%%%%%%%%%%%%%%%%%%%%%%%%%
Recall from Subsection \ref{subsec:Yuzvinsky0} that
$$H'=\{e_i, \overline{e_i} \mid 0\leq i\leq n\}\textrm{ and } B=\{y\in \Z_2^n \mid |y|\ne n/2\}.$$
Let $B_k=\{y\in\Z_2^n \mid |y|\leq k \hbox{ or } |y|\geq n-k\},\ k=0,\cdots,n/2-2.$
Then one has $B_k\subseteq B$ and $\card(B_k) = 2\sum_{i=0}^{k}\binom{n}{i}.$
By the symmetry of $H'$ and $B_k$, it is not hard to see that
$$H'+B_k=B_{k+1}=\{y\in\Z_2^n \mid |y|\leq k+1 \hbox{ or } |y|\geq n-k-1\}.$$
Moreover, for any $y\in B\setminus B_k$, one has $H'+y\nsubseteq B_{k+1}.$
%%%%%%%%%%%%%%%%%%%%%%%%%%%%%%%%%%%%%%%%%%%%%%%%%%%%%%%%%%%%%%%%%%%
\item $n\equiv 1\mod 4$.
%%%%%%%%%%%%%%%%%%%%%%%%%%%%%%%%%%%%%%%%%%%%%%%%%%%%%%%%%%%%%%%%%%%
Recall from Subsection \ref{subsec:Yuzvinsky1} that $H'=\{e_i, \overline{e_i} \mid 0\leq i\leq n\}$
and $$B=\Z_2^n\setminus\left\{\{y\in\Z_2^n\mid |y|=\frac{n+1}{2}\textrm{ and } y_1=1\}
\biguplus \{y\in\Z_2^n\mid |y|=\frac{n-1}{2}\textrm{ and } y_1=0\}\right\}.$$
Let $B_k=\{y\in\Z_2^n \mid |y|\leq k \hbox{ or } |y|\geq n-k\},\ k=0,\cdots,(n-1)/2-2.$
Then by a similar manner, one obtains the desired result.
%%%%%%%%%%%%%%%%%%%%%%%%%%%%%%%%%%%%%%%%%%%%%%%%%%%%%%%%%%%%%%%%%%%
\item $n\equiv 2\mod 4$.
%%%%%%%%%%%%%%%%%%%%%%%%%%%%%%%%%%%%%%%%%%%%%%%%%%%%%%%%%%%%%%%%%%%
Recall from Subsection \ref{subsec:Yuzvinsky2} that
$$H'=\{e_i, e_i+e_n, \overline{e_0}, \overline{e_n} \mid 1\leq i\leq n\}$$
and $B = B'_1\biguplus\{\overline{e_n}+B'_1\}\biguplus \{e_n+B'_1\}\biguplus\{\overline{e_0}+B'_1\},$
where $B'_1=\{(y,0) \mid y\in B_1\}$ and
\begin{equation*}
B_1=\{y\in \Z_2^{n-1}\mid |y|\geq\frac{n}{2}+1\}\biguplus\{y\in\Z_2^{n-1}\mid |y|=\frac{n}{2}\textrm{ and } y_1=0\}.
\end{equation*}
For $k=0,\cdots,(n-2)/2-2,$ let
$B_k = B'_{1k}\biguplus\{\overline{e_n}+B'_{1k}\}\biguplus \{e_n+B'_{1k}\}\biguplus\{\overline{e_0}+B'_{1k}\},$
where $B'_{1k}=\{(y,0) \mid y\in B_{1k}\}$ and
$B_{1k}=\{y\in\Z_2^{n-1} \mid |y|\leq k \hbox{ or } |y|\geq n-1-k\}.$
It is obvious that $B_k\subseteq B$ and $\card(B_k) = 4\sum_{i=0}^{k}\binom{n-1}{i}.$
Similarly, by the symmetry of $H'$ and $B_k$, one has $H'+B_k=B_{k+1}.$
Moreover, if $y\in B\setminus B_k$ then $H'+y\nsubseteq B_{k+1},$
which means that $B_k$ can not be expanded in this situation.
\end{enumerate}
\end{proof}
\begin{rem}
Note that if increasing $k$ by $1$ at the ``border'' of each case in the above theorem,
then one will recover the Yuzvinsky admissible triples by expanding $B_k$ to $B.$
Take $k=1$ in the first two series of admissible triples, 
then the resulting square identities improve the well-known Lagrange identity. 
More preciously, our third entries are nearly half of that of the latter. 
In general, our new triples look very similar to those of \cite[Theorem 2]{LMGO} 
as the idea of construction is nearly the same. 
The only difference is that our starting point is the Yuzvinsky triples 
instead of the Hurwitz-Radon triples in {\it loc. cit.}. 
\end{rem}
%%%%%%%%%%%%%%%%%%%%%%%%%%%%%%%%%%

\end{document}